\newcommand\C{\mathbb C}
\newcommand\Q{\mathbb Q}
\newcommand\R{\mathbb R}
\newcommand\Z{\mathbb Z}
\newcommand\N{\mathbb N}
\newcommand{\z}{\zeta}
\newcommand{\eps}{\varepsilon}
\newcommand\Aut{\operatorname{Aut}}
\newcommand\ms{\operatorname{MS}}
\newcommand\Ad{\operatorname{Ad}}
\newcommand{\bra}{\langle}
\newcommand{\ket}{\rangle}
\newcommand{\del}{\partial}
\newcommand{\B}{\mathcal{B}}
\newcommand{\G}{\mathcal{G}}
\newcommand{\M}{\mathcal{M}}
\newcommand{\T}{\mathcal{T}}
\newcommand{\W}{\mathcal{W}}
\newcommand{\X}{\mathcal{X}}
\newcommand{\Xsf}{\mathcal{X}^{\operatorname{sf}}}
\newcommand{\dt}{\operatorname{DT}}
\newcommand{\g}{\mathfrak{g}}
\makeatletter \@addtoreset{equation}{section} \makeatother
\newtheorem{thm}{Theorem}[section]
\newtheorem{lem}[thm]{Lemma}
\newtheorem{cor}[thm]{Corollary}
\newenvironment{rmk}{\noindent\textbf{Remark}.}{\\}
\newenvironment{exm}{\noindent\textbf{Example}.}{\\}
\title[]{TBA type equations and tropical curves}
\author{S. A. Filippini and J. Stoppa}
\date{}
\address{}
\email{}
\begin{document}
\begin{abstract} We revisit the wall-crossing behaviour of solutions to the Thermodynamic Bethe Ansatz type equations arising in a class of three-dimensional field theories, expressed as sums of ``instanton corrections". We explain how to attach to an instanton correction at a critical value a set of (combinatorial types of) tropical curves in $\R^2$ of fixed degree, which determines its jump to leading order. We show that a weighted sum over all such curves is in fact a tropical count. This goes through to the $q$-deformed setting. Our construction can be regarded as a formal mirror symmetric statement in the framework proposed by Gaiotto, Moore and Neitzke.
\end{abstract}

\maketitle

\section{Introduction} 

Motivated by physical arguments, Gaiotto, Moore and Neitzke \cite{gmn} proposed a new construction of holomorphic symplectic forms $\varpi(\z)$ (parametrized by $\z \in \C^*$) conjecturally yielding complete, smooth (or orbifold) hyperk\"ahler metrics on a class of torus fibrations $\pi\!: \M \to \B$. A mathematical introduction to this circle of ideas may be found in \cite{neitzke}. There has been considerable progress in relating this proposal to mirror-symmetric statements. In particular Chan \cite{chan} and Lu \cite{lu} interpreted the GMN construction in terms of mirror symmetry for $\M\to\B$ in the sense of Auroux \cite{auroux} or more generally of the Gross-Siebert program \cite{gross}. Some aspects of the works of Sutherland \cite{tomS}, Bridgeland-Smith \cite{tomB} and Kontsevich-Soibelman \cite{koso} are also related to this problem.   

The remarkable results mentioned above are global in nature. Here we describe a result that can be seen as a local, formal mirror-symmetric statement in the framework of \cite{gmn}. Suppose we restrict to a sufficiently small ball $B\subset \B$, such that the fibres of $\M_{B} \to B$ are smooth tori. Let $\Gamma$ denote the dual of the local system $\R^1\pi_*\Z$. Then the GMN holomorphic symplectic forms $\varpi(\z)$ on $\M_B$ are constructed using ``instanton corrections'', which are functions of $u \in B$ of the form
\begin{equation}\label{correction}
\G_{\T}(\z, u) = \int_{\ell(u)}\prod_{\{i\to j\}\subset \T} \frac{1}{2\pi i}\frac{d\z_j}{\z_j}\frac{\z_j + \z_i}{\z_j - \z_i} e^{-2\pi R(\z^{-1}_j Z_{\alpha(j)}(u) + \z_j \bar{Z}_{\alpha(j)}(u))}
\end{equation}
indexed by plane trees $\T = \{i_0 \to \T'\}$ for some nonempty, connected (naturally rooted) tree $\T'$ (setting $\z_{i_0} = \z$). Here $\alpha\!: \T'^{[0]} \to {\Gamma\setminus\{0\}}$ is a decoration of the vertices of $\T'$ by homology classes, $Z_{\bullet}(u)\!: \Gamma_u \to \C$ is a suitable homomorphism (a ``central charge", which in concrete applications is given by the periods of a meromorphic $1$-form), and $\ell(u)$ is the product of the rays $\ell_{\alpha(j)}(u) = \R_{<0} Z_{\alpha(j)}(u) \subset \C^*$. By construction $\G_{\T}(u)$ (for fixed, general $\z$) is a smooth function of $u$ away from the locus in $B$ where two or more rays $\ell_{\alpha}$ coincide; this is usually denoted by $\ms \subset B$ (the ``wall of marginal stability").

We regard the functions $\G_{\T}(\z, u)$ as B-model objects, since they encode corrections to the complex structure on $\M$ which makes $\varpi(\z)$ holomorphic. As $u$ crosses $\ms$, the instanton corrections $\G_{\T}(\z, u)$ change in a discontinuous way; this should correspond (conjecturally) to the wall-crossing of holomorphic discs with boundary on a Special Lagrangian fibre $\M_{u}$. At any rate, the corrections $\G_{\T}(\z, u)$ depend (by definition) on the choice of a fibre, and one would expect some interesting geometric information to emerge when $u$ crosses $\ms$. This is precisely what we wish to test, at a formal level, in this paper.

In the following we assume that $\alpha\!: \T'^{[0]} \to {\Gamma\setminus\{0\}}$ takes values in the monoid $\Gamma^+_{\gamma, \eta}$ generated by two classes $\gamma, \eta$ (we denote by $\Gamma_{\gamma, \eta}$ the corresponding lattice). We denote by $\deg(\T)$ the unordered collection $\{\alpha(i), i \in \T'^{[0]}\}$, and by $c(\T)$ the corresponding sum $\sum_{i \in \T'^{[0]}} \alpha(i)$.  By a labelling $\nu$ of $\T$ we mean a total order of $\T'^{[0]}$; we do not assume that $\nu$ is compatible with the natural orientation of $\T'$. It will be sufficient to consider trees for which the decoration $\gamma_{\T}$ attached to the root of $\T'$ is a (positive) multiple of $\gamma$.

A crucial role is played by a class of rational tropical curves $\Upsilon$ in $\R^2 = \Gamma_{\gamma, \eta}\otimes\R$, studied in \cite{gps} Section 2. Fixing a tropical degree ${\bf w}$ and a general collection of infinite ends $\mathfrak{d}$ with directions $-\gamma, -\eta$, weighted by ${\bf w}$, we denote by $\mathcal{S}(\mathfrak{d}, {\bf w})$ the set of connected, rational tropical curves spanning $\mathfrak{d}$, with a single additional end, and by $N^{\rm trop}({\bf w})$ the associated tropical count. We denote the combinatorial type of such a curve by $[\Upsilon]$. We will also write $[\mathcal{S}(\mathfrak{d}, {\bf w})]$ for the collection of combinatorial types occurring in $\mathcal{S}(\mathfrak{d}, {\bf w})$. 

We regard the tropical curves $\Upsilon$ above as A-model objects. On the one hand, the tropical counts $N^{\rm trop}({\bf w})$ encode certain relative Gromov-Witten invariants in weighted projective planes (\cite{gps} Theorems 3.4 and 4.4). On the other, they are conjecturally related to counts of holomorphic discs (see \cite{gross} Section 11). 

We will use the wall-crossing of a B-model object $\G_{\T}(\z)$ to construct A-model objects, namely a set of (combinatorial types of) tropical curves $[\Upsilon_j]$, together with signs $\pm 1$. Then we will check that the total count on the A-model (weighted by some natural factors), over all trees whose degree $\deg(\T)$ is identified with a tropical degree ${\bf w}$, is in fact the tropical invariant $N^{\rm trop}({\bf w})$. The proof relies on the technique of \cite{gps} Theorem 2.8. Thus the present work may be seen as an application of the methods developed in \cite{gps} to the study of the instanton corrections \eqref{correction}.

Fix a smooth point $u_0 \in \ms$. Suppose that $p(\tau)\!: [0, 1]\to B$ is a smooth path which only intersects $\ms$ transversely in $u_0$ for $\tau = \tau_0$. Let $\mathcal{C}$ be the union of the cones spanned in $\C$ by $Z_{\pm\gamma}(p(0)), Z_{\pm\eta}(p(0))$. Assume that the slopes of $Z_{\gamma}(p(0)), Z_{\eta}(p(0))$ and $Z_{\gamma}(p(1)), Z_{\eta}(p(1))$ are interchanged (in the positive quadrant, with $Z_{\gamma}(p(0))$ preceding $Z_{\eta}(p(0))$ in the clockwise order, and $Z_{\gamma}(p(1)), Z_{\eta}(p(1))$ contained in $\mathcal{C}$). Let
\begin{equation*}
J_{\T}(\tau^{\pm}, R) = \G_{\T}(p(\tau^-)) - \G_{\T}(p(\tau^+)),\quad \tau^- < \tau_0 < \tau^+.
\end{equation*}
\noindent\textbf{Lemma A.} (Lemma \ref{LemmaA}). \emph{Fix a tree $\T$ and a labelling $\nu$ as above. Let $\z \in \C\setminus\mathcal{C}$, and suppose that $c(\T) \in \Gamma$ is primitive.}
\begin{enumerate}
\item \emph{There is a natural construction (depending on $\nu$) that associates to an instanton correction $\G_{\T}(\z)$ and a critical value $u_0$ a set $\{[\Upsilon_j]\}_{\T}$ of combinatorial types of plane rational tropical curves of degree $\deg(\T)$, with signs $(-1)^{[\Upsilon_j]} \in \{\pm 1\}$. For suitable $\nu$ these combinatorial types (as $\T$ varies) all belong to a single set $[\mathcal{S}(\mathfrak{d}, \deg(\T))]$.}

\item \emph{$J_{\T}(\tau^{\pm}, R)$ has an asymptotic expansion, as $R \to +\infty$,
\begin{align*}
J_{\T}(\tau^{\pm}, R) = &\sum_{\{[\Upsilon_j]\}_{\T}} (-1)^{[\Upsilon_j]} f(\z, R)\\ &+ o_{\tau^{\pm}}(\frac{1}{\pi |Z_{c(\T)}(u_0)| R} e^{-\pi |Z_{c(\T)}(u_0)| R}) + O(|\tau^+ - \tau^-|),  
\end{align*}
where the $o_{\tau^{\pm}}$ error term is for fixed $\tau^{\pm}$, sufficiently close to $\tau_0$, the $O(|\tau^+ - \tau^-|)$ term is uniform in $R \geq 1$ as $|\tau^+ - \tau^-| \to 0$, and we have, nontangentially to $\partial\mathcal{C}$,} 
\begin{equation*}
\lim_{\z \to 0} f(\z, R) \sim \frac{1}{\pi |Z_{c(\T)}(u_0)| R}e^{-\pi |Z_{c(\T)}(u_0)| R},
\end{equation*}
\emph{where the expansion is for $R \to +\infty$ and holds uniformly for $|\tau^+ - \tau^-|$ sufficiently small.}
\end{enumerate}
Let us now sum over all trees with the same $\deg(\T)$, identified with a tropical degree ${\bf w} = ({\bf w}_{1}, {\bf w}_2) = (w_{i j})$, and with a fixed root label (say $w_{11}\gamma$), 
\begin{equation*}
J_{{\bf w}}(\tau^{\pm}, R) = \sum_{\deg(\T) = {\bf w}, \gamma_{\T} = w_{11}\gamma} \W_{\T} J_{\T}(\tau^{\pm}, R),
\end{equation*}
(the weights $\W_{\T}$, which are natural in the framework of \cite{gmn}, will be introduced in section \ref{TBAsection}). Notice that $c(\T)$ is fixed and can be identified with the integral vector $(|{\bf w}_1|, |{\bf w}_2|)$. We write ${\bf w}'$ for the degree obtained by dropping $w_{11}$.\\

\noindent\textbf{Theorem B.} (Theorem \ref{TheoremB}). \emph{Let $(|{\bf w }_1|, |{\bf w}_2|)$ be primitive. There is an expansion, as $R \to \infty$,}
\begin{align*}
J_{{\bf w}}(\tau^{\pm}, R) & = \prod_{i, j}\frac{1}{w^2_{i j}}\frac{N^{\rm trop}({\bf w})}{\Aut({\bf w}')} f(\z, R)\\ & + o_{\tau^{\pm}}(\frac{1}{\pi |Z_{c(\T)}(u_0)| R} e^{-\pi |Z_{c(\T)}(u_0)| R}) + O(|\tau^+ - \tau^-|).
\end{align*}
\emph{Equivalently,}
\begin{equation*}
\sum_{\deg{\T} = {\bf w}, \gamma_{\T} = w_{11}\gamma} \W_{\T} \left(\sum_{\{[\Upsilon_j]\}_{\T}} (-1)^{[\Upsilon_j]}\right) = \prod_{i, j}\frac{1}{w^2_{i j}}\frac{N^{\rm trop}({\bf w})}{\Aut({\bf w}')}.
\end{equation*}\\
 
\begin{rmk} The assumption that $c(\T)$ is primitive in Lemma \ref{LemmaA} (and similarly that $(|{\bf w }_1|, |{\bf w}_2|)$ is primitive in Theorem \ref{TheoremB}) is made mostly for simplicity of exposition. We can prove almost the same statements in the general case, with one important difference: we have not been able to show that for suitable $\nu$ the tropical types $[\Upsilon_j]$ are connected.
\end{rmk}

\begin{exm} Consider the integral corresponding to $\T = \{ i_0 \to i \to j \}$ with $\alpha(i) = \gamma$, $\alpha(j) = \eta$, at a fixed point $u^+ = p(\tau^-)$ for $\tau^- < \tau_0$, sufficiently close to $\tau_0$,
\begin{equation}\label{IntroExaInt1}
\int_{\ell^+_{\gamma}}\frac{1}{2\pi i}\frac{d\z_i}{\z_i}\frac{\z_i + \z}{\z_i - \z} e^{-2\pi R(\z^{-1}_i Z^+_{\gamma} + \z_i \bar{Z}^+_{\gamma})}\int_{\ell^+_{\eta}}\frac{1}{2\pi i}\frac{d\z_j}{\z_j}\frac{\z_j + \z_i
}{\z_j - \z_i} e^{-2\pi R(\z^{-1}_j Z^+_{\eta} + \z_j \bar{Z}^+_{\eta})}.  
\end{equation}
(denoting with a $+$ label quantities evaluated at $u^+$). Choose $u^- = p(\tau^+)$ for $\tau^+ = 2\tau_0 - \tau^-$. By the residue theorem (and denoting with a $-$ label quantities evaluated at $u^-$), for general $\z$ we can rewrite \eqref{IntroExaInt1} as
\begin{equation}\label{IntroExaInt2}
\int_{\ell^-_{\gamma}}\frac{1}{2\pi i}\frac{d\z_i}{\z_i}\frac{\z_i + \z}{\z_i - \z} e^{-2\pi R(\z^{-1}_i Z^+_{\gamma} + \z_i \bar{Z}^+_{\gamma})}\int_{\ell^-_{\eta}}\frac{1}{2\pi i}\frac{d\z_j}{\z_j}\frac{\z_j + \z_i
}{\z_j - \z_i} e^{-2\pi R(\z^{-1}_j Z^+_{\eta} + \z_j \bar{Z}^+_{\eta})}
\end{equation}
plus a residue term
\begin{equation}\label{IntroExaInt3}
\int_{\ell^-_{\gamma + \eta}}\frac{1}{2\pi i}\frac{d\z_i}{\z_i}\frac{\z_i + \z}{\z_i - \z} e^{-2\pi R(\z^{-1}_i Z^+_{\gamma + \eta} + \z_i \bar{Z}^+_{\gamma + \eta})}.  
\end{equation}
We regard this computation (attaching to \eqref{IntroExaInt1} the residue \eqref{IntroExaInt3}) as defining the combinatorial type of a tropical line in $\R^2 = \Gamma_{\gamma, \eta}\otimes\R$, with infinite ends in the direction of $-\gamma, -\eta$ and $\gamma+\eta$. This is the unique element of $[\mathcal{S}(1,1)]$. 
\begin{figure}[ht]
\centerline{\includegraphics[scale=.8]{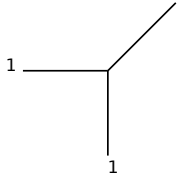}}
\caption{Tropical type of \eqref{IntroExaInt1}.}
\end{figure}
Since $Z(p(\tau))$ is continuous across $\tau_0$ the limit of \eqref{IntroExaInt2} as $\tau^+ \to \tau^+_0$ equals the limit as $\tau^- \to \tau^-_0$ of
\begin{equation*}
\int_{\ell^-_{\gamma}}\frac{1}{2\pi i}\frac{d\z_i}{\z_i}\frac{\z_i + \z}{\z_i - \z} e^{-2\pi R(\z^{-1}_i Z^-_{\gamma} + \z_i \bar{Z}^-_{\gamma})}\int_{\ell^-_{\eta}}\frac{1}{2\pi i}\frac{d\z_j}{\z_j}\frac{\z_j + \z_i
}{\z_j - \z_i} e^{-2\pi R(\z^{-1}_j Z^-_{\eta} + \z_j \bar{Z}^-_{\eta})}, 
\end{equation*} 
and similarly the limit of \eqref{IntroExaInt3} as $\tau^+ \to \tau^+_0$ equals 
\begin{equation*}
\int_{\ell^0_{\gamma + \eta}}\frac{1}{2\pi i}\frac{d\z_i}{\z_i}\frac{\z_i + \z}{\z_i - \z} e^{-2\pi R(\z^{-1}_i Z^0_{\gamma + \eta} + \z_i \bar{Z}^0_{\gamma + \eta})}.  
\end{equation*}
Thus the jump of $\G_{\T}(p(\tau))$ across $\tau_0$ is given by the latter integral, which when $\z \to 0$ nontangentially to $\ell^0_{\gamma + \eta}$ looks like $\frac{1}{\pi |Z^0_{\gamma + \eta}| R}e^{-\pi  |Z^0_{\gamma + \eta}| R} + o(\frac{1}{\pi |Z^0_{\gamma + \eta}| R} e^{-\pi |Z^0_{\gamma + \eta}|R})$. From this we read off $N^{\rm trop}(1,1) = 1$. 
\end{exm}

\begin{rmk} From the point of view of \cite{gmn} it is natural to consider corrections $\G_{\T}(\z)$ which involve $m\geq 3$ or more lattice elements $\{\gamma_i\}$, with $Z(\gamma_i)$ spanning distinct rays. According to the construction in Lemma \ref{LemmaA}, these appear to be naturally related to tropical curves in $\R^m$. It might be interesting to see if these also have an enumerative meaning as in Theorem \ref{TheoremB}.
\end{rmk}\\

\noindent In the rest of the paper we will forget the fibration $\M\to\B$ (except for a few comments), and work formally with the instanton corrections \eqref{correction}. In \cite{gmn}, these corrections arise from iterative solutions of a Thermodynamic Bethe Ansatz type integral equation. In section \ref{TBAsection} we introduce a more algebraic version of this equation, for which all convergence problems become trivial, and define instanton corrections in this context. As we will explain, we are effectively replacing the complicated diffeomorphism group $\operatorname{Diff}(\M_u, \Gamma\otimes_{\Z}\C^*)$ which appears in \cite{gmn} with the automorphism group $\Aut_R(\widehat{\g})$ of a suitable algebra over an Artin ring. In section \ref{tropicalSection}, after briefly recalling the few notions we need from tropical geometry, we use some basic results about this formal TBA equation to prove Lemma A (Lemma \ref{LemmaA}) and Theorem B (Theorem \ref{TheoremB}). Finally in section \ref{qSection} we introduce a natural $q$-deformation of our formal TBA equation, and briefly explain how Lemma A and Theorem B go through to the $q$-deformed setting, replacing the usual tropical counts with suitable $q$-deformed ones (Block-G\"ottsche invariants).\\

\noindent\textbf{Acknowledgements.} We are very grateful to Tom Bridgeland and Mario Garcia Fernandez for some conversations related to the material presented here. This research was supported in part by the Hausdorff Institute for Mathematics, Bonn (JHRT ÒMathematical PhysicsÓ), GRIFGA 2012-2015 and ERC Starting Grant 307119.

\section{The TBA type equation}\label{TBAsection}

Let $\Gamma \cong \Z^{2r}$ be a fixed lattice with a skew-symmetric, bilinear form $\bra -, - \ket$. A \emph{central charge} is a homomorphism $Z\!: \Gamma \to \C$ (once a fixed basis of $\Gamma$ is declared to be positive, one usually requires that elements of the positive subsemigroup $\Gamma^+$ are mapped to the upper half plane $\mathbb{H}$). The choice of $Z$ induces a notion of \emph{slope} for the elements $\alpha \in \Gamma$, which is simply the slope of the ray containing the image $Z(\alpha)$ (if $Z(\alpha) = 0$ the slope is undefined).\\

\noindent{\textbf{Assumption.}} In the rest of this paper we will assume that lattices $(\Gamma, \bra - , - \ket)$ and central charges $Z$ satisfy the condition that for elements $\alpha, \beta$ having the same slope (with respect to $Z$) or with $Z(\alpha) = Z(\beta) = 0$, one has $\bra \alpha, \beta \ket = 0$. This is certainly true if $\Gamma \cong \Z^2$ and $Z$ is nondegenerate (i.e. the induced linear map $\Gamma\otimes_{\Z}\R \to \R^2$ has maximal rank), but it is enough to require that $\Gamma$ splits as $\Gamma_1 \oplus \Gamma_2$ with $\Gamma_1 \cong \Z^2, \bra \Gamma_2, \Gamma \ket = 0$ and that $Z$ factors through the projection $\pi\!: \Gamma \to \Gamma_1$, giving a nondegenerate map $\Gamma_1 \to \C$. We will always denote by $\pi$ this projection. In the wall-crossing literature one usually restricts to the lattice generated by two fixed elements. Our assumption gives a very similar restriction, but for our purposes we will need to be able to work with linearly independent $\gamma_i \in \Gamma$ with $Z(\gamma_i) = Z(\gamma_j)$ and $\bra \gamma_i, \gamma_j\ket = 0$.\\

\noindent Consider the infinite dimensional complex Lie algebra $\g$ generated by $e_{\alpha}, \alpha \in \Gamma$ with bracket 
\begin{equation}\label{classBracket}
[e_{\alpha}, e_{\beta}] = (-1)^{\bra \alpha, \beta\ket}\bra \alpha, \beta\ket e_{\alpha + \beta}.
\end{equation}
We also endow $\g$ with the associative, commutative product determined by 
\begin{equation}\label{classProduct}
e_{\alpha} e_{\beta} = (-1)^{\bra \alpha, \beta\ket} e_{\alpha + \beta}.
\end{equation}
With the associative product \eqref{classProduct} and bracket \eqref{classBracket}, $\g$ becomes a Poisson algebra: the linear map $[x, -]$ satisfies the Leibniz rule. Notice that by our assumption elements $e_{\alpha}, e_{\beta}$ of $\g$ such that $\alpha, \beta$ have the same slope (with respect to $Z$) Poisson commute: $[e_{\alpha}, e_{\beta}] = 0$.\\ 

\noindent A \emph{BPS spectrum} (for the fixed central charge $Z$) is a function $\Omega\!: \Gamma \to \mathbb{Z}$ such that $\Omega(\gamma) = \Omega(-\gamma)$ and $\Omega(0) = 0$. The \emph{BPS rays} of $\Omega$ for a fixed central charge are the rays $\ell_{\alpha} \subset \C^*$ of the form $\R_{< 0}Z(\alpha)$ where $\Omega(\alpha) \neq 0$.\\

\noindent Let $R$ be an Artin $\C$-algebra or a complete local $\C$-algebra, with maximal ideal $\mathfrak{m}_R\subset R$. We write $\widehat{\g}$ for the completed tensor product of $\g$ with $R$,
\begin{equation*}
\widehat{\g} = \g\,\widehat{\otimes}_{\C} R = \lim_{\longleftarrow} \g \otimes_{\C} R/{\mathfrak{m}^k_R}.
\end{equation*}  

We say that a family of automorphisms $\psi(\z) \in \Aut_R(\widehat{\g})$ (automorphisms of $\widehat{\g}$ as an associative, commutative algebra) parametrised by an open set $U \subset \C^*$ is \emph{holomorphic} if for all $\alpha \in \Gamma$ the image $\psi(\z)(e_{\alpha})$ is a holomorphic function on $U$ with values in a linear subspace of $\widehat{\g}$ generated by finitely many $\{r e_{\beta}\}, r \in R$ (in other words there is a Laurent expansion for $\psi(\z)(e_{\alpha})$ such that the $\widehat{\g}$-valued coefficients involve only a finite number of vector space generators $\{r e_{\beta}\}$). Moreover if $\del U$ contains the BPS ray $\ell_{\alpha}$, we require $\psi(\z)(e_{\alpha})$ to extend to a holomorphic function of the same kind in a neighborhood of $\ell_{\alpha}$ in $\C^*$. We denote the space of all such families by $\mathcal{H}(U, \Aut_R(\widehat{\g}))$. From now we write $\psi_{\alpha}(\z)$ or simply $\psi_{\alpha}$ for $\psi(\z)(e_{\alpha})$. If we replace $\z$ with a real variable $\tau \in I$ (for $I \subset \R$ an interval) we may give a similar definition for a \emph{continuous} family $\psi(\tau)$; in other words we now require that $\psi(\tau)(e_{\alpha})$ is a linear combination of finitely many $\{r e_{\beta}\}$, with complex valued continuous coefficients. We denote this set by $C^0(I, \Aut_R(\widehat{\g}))$.

Suppose now that $\Gamma$ is generated by elements ${\gamma_1, \dots, \gamma_{\ell_1}}$ and $\eta_1, \dots, \eta_{\ell_2}$ such that $\bra \gamma_i, \gamma_j\ket = \bra \eta_i, \eta_j \ket = 0, \bra \gamma_i, \eta_j\ket = 1$. Let all the $\gamma_i$ ($\eta_j$) have a common central charge $Z(\gamma_i)$ (respectively $Z(\eta_j)$). We choose $\pi\!:\Gamma \to \Z^2$ given by $\pi(\gamma_i) = (0, 1), \pi(\eta_j) = (1, 0)$.

Let $\widehat{\g}$ be the completion over the base ring 
\begin{equation*}
R_k = \frac{\C[[s_1, \dots, s_{\ell_1}, t_1, \dots, t_{\ell_2}]]}{(s^{k+1}_1, \dots, s^{k+1}_{\ell_1}, t^{k+1}_1, \dots, t^{k+1}_{\ell_2})}.
\end{equation*}
Choose $U$ to be the complement in $\C^*$ of finitely many BPS rays $\{\ell_{\gamma'}\}$: we remove those for which $\gamma' = \sum^{\ell_1}_{i = 1} a_i \gamma_i + \sum^{\ell_2}_{j = 1} b_j \eta_j$ with $|a_i|, |b_j| \leq k$. We write $(s, t)^{\gamma'}$ for the product $\prod_i s^{|a_i |}_i \prod_j t^{|b_j |}_j$. Fix a reference $f \in \mathcal{H}(\C^*, \Aut_{R_k}(\widehat{\g}))$ (notice that $f$ is required to be holomorphic on all $\C^*$). We introduce a kernel
\begin{equation*}
\rho(\z, \z') = \frac{1}{4\pi i}\frac{\z' + \z}{\z' - \z}.
\end{equation*}
We wish to define an integral operator $\Phi$ on (a suitable subspace of) $\mathcal{H}(U, \Aut_{R_k}(\widehat{\g}))$ as
\begin{equation}\label{TBop}
(\Phi(\psi))_{\alpha}(\z) = f_{\alpha}(\z)\exp\left(\sum_{\gamma'} \Omega(\gamma')\bra \gamma', \alpha\ket \int_{\ell_{\gamma'}}\frac{d\z'}{\z'}\rho(\z, \z')\log(1 - (s, t)^{\gamma'}\psi_{\gamma'}(\z'))\right),
\end{equation}
summing over all $\gamma' \in \Gamma$ with $\Omega(\gamma') \neq 0$. 

\begin{rmk} The exponential here might be a little confusing: what we mean is the exponential of a \emph{commutative} power series (i.e. using the commutative algebra structure \eqref{classProduct} on $\widehat{\g}$).
\end{rmk}

\noindent In fact since we are working over $R_k$, $(s, t)^{\gamma'}$ vanishes for all but finitely many $\gamma'$, and the power series expansions of $\log(1 - (s, t)^{\gamma'}\psi_{\gamma'}(\z'))$ are all finite. Therefore the integrals on the right hand side of \eqref{TBop} make sense as line integrals of a holomorphic function with values in a finite dimensional vector space, and the coefficients of the Laurent expansion for $(\Phi(\psi))_{\alpha}(\z)$ involve finitely many $\{e_{\beta}\}$. Also (at least formally, modulo convergence of each integral) we have $\Phi(\psi)_{\alpha + \beta} = (-1)^{\bra \alpha, \beta\ket}\Phi(\psi)_{\alpha} \Phi(\psi)_{\beta}$. In other words $\Phi$ (when well defined) actually gives an endomorphism of $\widehat{\g}$.\\

\noindent\begin{rmk} For a suitable choice of $f$ (to be specified in a moment) $\Phi$ is formally the same integral operator appearing in \cite{gmn} Section 5.3 (see also \cite{neitzke} section 4.2). In that context solutions of $\X = \Phi(\X)$ are denoted by $\X_{\alpha}(m, R; \z)$, where $m \in \M$ is a point in the moduli space of the underlying physical theory compactified on a circle $S^1_R$ of radius $R$. The $\X_{\alpha}(m, R; \z)$ have a geometric meaning as (exponential, holomorphic) Darboux coordinates for the  hyperk\"ahler metric on the moduli space. Then as explained in ibid. Section 5 the integral operator has a beautiful interpretation as the superposition of the operators whose fixed points are the coordinates for the exact $1$-particle hyperk\"ahler metrics. The $\X_{\alpha}(m, R; \z)$ can also be regarded collectively as a family of diffeomorphisms $\X(u, R; \z)\!: \M_u \to \Gamma\otimes_{\Z}\C^*$ which is holomorphic in $\z$, where $u \in \mathcal{B}$ is a point in the Coulomb branch of the theory (so the fibre $\M_u$ is a compact torus, locally $\Gamma\otimes_{\Z} \R / 2\pi\Z$). In our definition of $\Phi$ we have replaced $\operatorname{Diff}(\M_u, \Gamma\otimes_{\Z}\C^*)$ with $\Aut_{R_k}(\widehat{\g})$.
\end{rmk}

\noindent Fix $R > 0$. The \emph{semiflat automorphism of} $\widehat{\g}$ \emph{of radius} $R$ (with respect to a fixed central charge $Z$) is the element of $\Aut_{R_k}(\widehat{\g})$ given by
\begin{equation*}
\psi^{0}_{\alpha}(\z) = \exp(\pi R (\z^{-1} Z(\alpha) + \z\bar{Z}(\alpha))) e_{\alpha}.
\end{equation*}
We introduce a subspace $\widetilde{\mathcal{H}} \subset \mathcal{H}(U, \Aut_{R_k}(\widehat{\g}))$ spanned by those functions for which the limit
\begin{equation*}
\lim_{\z \to 0} \exp(-\pi R (\z^{-1} Z(\alpha) + \z\bar{Z}(\alpha))) \psi_{\alpha}(\z)
\end{equation*} 
exists in $\widehat{\g}$ for all $\alpha$, and the same holds for the limit as $\z \to \infty$, except possibly when $\z \to 0$  or $\z \to \infty$ tangentially to a BPS ray.\\

\begin{rmk} In our present, simplified context the $\psi^{0}_{\alpha}(\z)$ replace the semiflat coordinates $\Xsf_{\alpha}(m, R; \z)$ of \cite{gmn} Section 3.3 (obtained by ``naive dimensional reduction").
\end{rmk} 

\noindent \noindent From now on we pick $f = \psi^0$ in the definition of $\Phi$.
\begin{lem} The operator $\Phi$ given by \eqref{TBop} (with the choice $f = \psi^0$) is well defined $\widetilde{\mathcal{H}} \to \widetilde{\mathcal{H}}$.
\end{lem}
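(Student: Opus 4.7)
The plan is to break the claim into three checks on $\Phi(\psi)_\alpha(\zeta)$: (i) each line integral in \eqref{TBop} converges absolutely; (ii) the resulting function is holomorphic in $\zeta$ on $U$, extends holomorphically across the BPS rays in $\partial U$, and takes values in a subspace generated by finitely many $\{r e_\beta\}$; (iii) the semiflat growth condition defining $\widetilde{\mathcal{H}}$ is preserved. First I would exploit the Artinian nature of $R_k$: because $s_i^{k+1} = t_j^{k+1} = 0$, the monomial $(s,t)^{\gamma'}$ vanishes for all but finitely many $\gamma'$, and when nonzero it is nilpotent of bounded order. So the sum over $\gamma'$ in \eqref{TBop} is finite and each $\log(1 - (s,t)^{\gamma'}\psi_{\gamma'}(\zeta'))$ truncates to a polynomial of bounded degree in the nilpotent element $(s,t)^{\gamma'}\psi_{\gamma'}(\zeta')$. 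This reduces the problem to well-definedness of one integral of the form $\int_{\ell_{\gamma'}}\frac{d\zeta'}{\zeta'}\rho(\zeta,\zeta')\psi_{\gamma'}(\zeta')^n$ for each $\gamma'$ and each bounded $n$.

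For convergence I would parametrise $\ell_{\gamma'}$ as $\zeta'(t) = -tZ(\gamma')$, $t > 0$, and compute $\pi R(\zeta'(t)^{-1}Z(\gamma') + \zeta'(t)\bar{Z}(\gamma')) = -\pi R(t^{-1} + t|Z(\gamma')|^2)$. Hence the scalar factor in $\psi^0_{\gamma'}(\zeta'(t))$ is real and decays doubly-exponentially as $t \to 0$ and as $t \to \infty$. By the defining property of $\widetilde{\mathcal{H}}$ the ratio $\psi_{\gamma'}(\zeta')/\psi^0_{\gamma'}(\zeta')$ is bounded on any non-tangential sector about $\ell_{\gamma'}$ near $0$ and $\infty$, while $\psi_{\gamma'}$ is holomorphic in a full neighbourhood of the ray so its growth on the ray itself is controlled. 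Raising to a bounded power and multiplying by $\rho(\zeta,\zeta')$ (which is uniformly bounded in $\zeta' \in \ell_{\gamma'}$ provided $\zeta$ stays at positive distance from $\ell_{\gamma'}$) preserves the double-exponential decay, yielding absolute convergence of each integral.

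Holomorphy in $\zeta \in U$ then follows from Morera's theorem applied to the jointly holomorphic kernel $\rho(\zeta,\zeta')$ away from the diagonal. Extension across a ray $\ell_{\gamma'} \subset \partial U$ is achieved by deforming the integration contour slightly to either side of $\ell_{\gamma'}$, which is legal because $\psi_{\gamma'}$ is holomorphic on a whole neighbourhood of the ray; the two one-sided extensions differ by the residue of the integrand at $\zeta = \zeta'$. The finite-dimensionality of the range is automatic: the exponent is a finite combination of elements $e_\beta$ and, since these are nilpotent in $\widehat{\g}$ over $R_k$, the (commutative) exponential truncates to a polynomial in finitely many $e_\beta$. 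The multiplicativity $\Phi(\psi)_{\alpha+\beta} = (-1)^{\bra\alpha,\beta\ket}\Phi(\psi)_\alpha \Phi(\psi)_\beta$ follows from the linear dependence of the exponent on $\alpha$ through $\bra\gamma',\alpha\ket$ together with the standing assumption on $Z$ (rays with equal slope Poisson-commute, so no correction term appears in passing from sums of exponents to products). Finally, to see $\Phi(\psi) \in \widetilde{\mathcal{H}}$: as $\zeta \to 0$ (respectively $\zeta \to \infty$) non-tangentially to every BPS ray in the sum, $\rho(\zeta,\zeta') \to \frac{1}{4\pi i}$ (respectively $-\frac{1}{4\pi i}$) uniformly on compact subsets of each $\ell_{\gamma'}$; the bounds of step two allow dominated convergence, so the exponent tends to a finite limit and dividing $\Phi(\psi)_\alpha(\zeta)$ by $\psi^0_\alpha(\zeta)$ produces the required limit in $\widehat{\g}$. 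The main obstacle is the convergence step, where one must carefully use the semiflat decay along $\ell_{\gamma'}$ to dominate the merely non-tangentially controlled growth of $\psi_{\gamma'}$ near the endpoints of the ray; everything else is standard complex analysis over the Artin base $R_k$.
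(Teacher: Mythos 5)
Your proposal has the right overall structure, and most ingredients (finiteness and nilpotence in $R_k$, reduction of convergence to a basic Bessel-type integral over each $\ell_{\gamma'}$, Morera for holomorphy in $\zeta$, multiplicativity of $\Phi(\psi)$, dominated convergence for the semiflat limit using $\rho(\zeta,\zeta')\to\pm\tfrac{1}{4\pi i}$) match the paper's proof in substance. But there is a genuine gap in the step where you check the extension requirement of $\mathcal{H}(U,\Aut_{R_k}(\widehat{\g}))$. That requirement says only that, for each fixed $\alpha$, the component $\Phi(\psi)_\alpha$ must extend holomorphically across the single ray $\ell_\alpha$ --- not across every BPS ray in $\partial U$. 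Your contour-deformation argument (``deform the contour to either side of $\ell_{\gamma'}$; the two one-sided extensions differ by a residue'') does not establish a holomorphic extension; on the contrary, a nonzero difference of one-sided boundary values is exactly the statement that the function does \emph{not} extend, and this is precisely the content of Lemma \ref{JumpLemma}: $\Phi(\psi)_\alpha$ really does jump across the rays $\ell_{\gamma'}$ with $\langle\gamma',\alpha\rangle\neq 0$.

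The observation you are missing is that every term in the sum defining $\Phi(\psi)_\alpha$ whose integration ray is $\ell_\alpha$ carries the coefficient $\langle\gamma',\alpha\rangle$ with $Z(\gamma')\in\ell_\alpha$, i.e.\ $\gamma'$ has the same slope as $\alpha$; by the paper's standing Assumption on $(\Gamma,\langle-,-\rangle,Z)$ this forces $\langle\gamma',\alpha\rangle=0$. Hence the integral over $\ell_\alpha$ simply does not contribute, and $\Phi(\psi)_\alpha$ is automatically holomorphic in a full neighborhood of $\ell_\alpha$ because the only potentially singular contribution there is absent. Without this algebraic vanishing (which is the whole reason the Assumption is imposed), the extension property would fail and the lemma would be false as stated.
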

\begin{proof} First we show that for fixed $\psi \in \widetilde{\mathcal{H}}$, $\alpha \in \Gamma$ and $\zeta \in U$ the right hand side of \eqref{TBop} is a well defined element of $\widehat{\g}$. Since $\z$ lies away from the rays of integration, the function $(\z')^{-1}\rho(\z, \z')$ is holomorphic on each $\ell_{\gamma'}$. And as $(s, t)^{\gamma'} = 0$ in $R_k$ for all but finitely many $\gamma'$ we are effectively summing over a finite number of $\gamma'$. Moreover all the integrals appearing are convergent. To see this we first expand $\log(1 - (s, t)^{\gamma'}\psi_{\gamma'}(\z'))$ in $R_k$, then use the boundary conditions in $\widetilde{\mathcal{H}}$ for $\z \to 0$, $\z \to \infty$ to see that each integral is dominated by the sum of a fixed finite number of integrals of the form
\begin{equation*}
C \int_{\ell_{\gamma'}}\frac{d\z'}{\z'}\frac{\z' + \z}{\z' - \z}\exp(\pi R (\z'^{-1} Z(\gamma') + \z'\bar{Z}(\gamma'))).
\end{equation*}  
for some constant $C$. These are all convergent; we will study these and more general integrals in Lemma \ref{BesselLemma} below. Then it follows from standard theory that the right hand side of \eqref{TBop} is a holomorphic function of $\z \in U$ (since $(\z')^{-1}\rho(\z, \z')$ is, and by the above convergence). On the other hand it is also holomorphic in a neighborhood of $\ell_{\alpha}$, since the integral along $\ell_{\alpha}$ appears with a factor of $\bra \alpha, \alpha\ket$ and therefore vanishes. Finally we can use the Lemma \ref{BesselLemma} to take the limit of 
\begin{equation*}
\exp(-\pi R (\z^{-1} Z(\alpha) + \z\bar{Z}(\alpha))) \Phi(\psi)_{\alpha}(\z)
\end{equation*}
as $\z \to 0$ (at least nontangentially to a BPS ray): by the definition of $\psi^0$, this is a constant element of $\widehat{\g}$ given by 
\begin{equation*}
\exp\left(\sum_{\gamma'} \Omega(\gamma')\bra \gamma', \alpha\ket\frac{1}{4\pi i}\int_{\ell_{\gamma'}}\frac{d\z'}{\z'}\log(1 - (s, t)^{\gamma'}\psi_{\gamma'}(\z'))\right)e_{\alpha}.
\end{equation*}
The same argument applies to the $\z \to \infty$ limit, which completes the check that $\Phi(\psi)$ lies in $\widetilde{\mathcal{H}}$.  
\end{proof}
\noindent The basic integral we need to consider has the form
\begin{equation}\label{basicIntegral}
\int_{\R_{< 0} e^{i\psi} c} \frac{d\z'}{\z'}\frac{\z'+\z}{\z'-\z} \exp(\pi R(\z'^{-1}c + \z' \bar{c}))
\end{equation}
where $c\in \C^*$, $\psi$ is a sufficiently small angle $|\psi| < \eps$, and $\z \notin \R_{< 0} e^{i\psi} c$. For later applications we also look at the integral along an arc,
\begin{equation}\label{arcIntegral}
\int_{|\psi|<\eps} \frac{d\z'}{\z'}\frac{\z'+\z}{\z'-\z} \exp(\pi R (\z'^{-1} c + \z' \bar{c}))
\end{equation}
where $\z' \in \R_{< 0} e^{i\psi} c$, $|\z'|$ is fixed, $\z \notin \R_{< 0} c$ and $\eps$ is small enough.  
\begin{lem}\label{BesselLemma} The integral \eqref{basicIntegral} converges for sufficiently small $|\psi|$, and in fact its modulus is bounded above by $\frac{C}{2\pi R |c|}\exp(-2\pi R |c|)$ for $R$ large enough (for a constant $C$ depending on the angular distance of $\z$ from $\R_{< 0} e^{i\psi} c$). Similarly for $\eps$ sufficiently small \eqref{arcIntegral} vanishes as $|\z'| \to 0$ or $|\z'| \to \infty$. 
\end{lem}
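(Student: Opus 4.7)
The plan is to parametrize both contours explicitly, reducing each estimate to a real one-variable integral that can be handled by elementary saddle-point (Bessel-type) analysis.

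For the ray integral \eqref{basicIntegral}, I would set $\z' = -s\,e^{i\psi}\,c/|c|$ with $s\in(0,\infty)$, so that $d\z'/\z' = ds/s$, and a direct computation yields
\[
\pi R \bigl(\z'^{-1} c + \z' \bar c\bigr) = -\pi R|c|\bigl(s^{-1} e^{-i\psi} + s\, e^{i\psi}\bigr),
\]
whose real part equals $-\pi R|c|\cos\psi\,(s+s^{-1})$. Since $s+s^{-1}\geq 2$ with equality at the saddle $s=1$, and $\cos\psi>0$ for $|\psi|$ small, the modulus of the exponential is uniformly bounded by $e^{-2\pi R|c|\cos\psi}$ and decays rapidly as $s\to 0$ or $s\to\infty$. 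Simultaneously, setting $\lambda = -\z\,e^{-i\psi}|c|/c$, the rational kernel $(\z'+\z)/(\z'-\z)$ equals $(s+\lambda)/(s-\lambda)$; a direct computation of the critical points of $|(s+\lambda)/(s-\lambda)|^2$ on $s>0$ shows that the only interior extremum occurs at $s=|\lambda|$, with value $|\cot(\arg\lambda/2)|$, while the boundary limits $s\to 0$ and $s\to\infty$ both give modulus $1$. Since $\arg\lambda$ coincides (modulo $\pi$) with the angular distance of $\z$ from the ray, this yields a finite uniform bound $C$ depending only on that angular distance.

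Combining both estimates and substituting $u=\log s$,
\[
\bigl|\eqref{basicIntegral}\bigr| \leq C\int_{-\infty}^{\infty} e^{-2\pi R|c|\cos\psi\,\cosh u}\,du = 2C\,K_0\bigl(2\pi R|c|\cos\psi\bigr),
\]
and the classical large-argument asymptotic of the modified Bessel function $K_0$ produces the stated exponential decay (with the $\cos\psi$ factor and any sub-exponential prefactor absorbed into $C$ for $R$ large enough).

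For the arc integral \eqref{arcIntegral}, I would parametrize $\z' = -r\,e^{i\psi}\,c/|c|$ with $r=|\z'|$ fixed and $\psi\in(-\eps,\eps)$, so $d\z'/\z' = i\,d\psi$. The kernel remains uniformly bounded on the arc by the same $(s+\lambda)/(s-\lambda)$-type calculation (with $s=r$ now held fixed and $\psi$ varying in a small neighborhood of $0$), while the real part of the exponent, $-\pi R|c|\cos\psi\,(r+r^{-1})$, tends to $-\infty$ uniformly in $\psi$ as $r\to 0$ or $r\to\infty$; since the arc has bounded length $\sim 2\eps$, the integral itself tends to $0$ in both limits.

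The only slightly delicate step is the uniform boundedness of $(\z'+\z)/(\z'-\z)$, which depends crucially on the hypothesis that $\z$ remains at strictly positive angular distance from the contour; once this is secured, both statements reduce to routine Laplace/Bessel estimates.
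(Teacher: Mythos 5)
Your argument matches the paper's proof almost step for step: both reduce the ray integral to a modified Bessel integral $2K_0(2\pi R|c|\cos\psi)$ (the paper's substitution $\z' = -e^{s+i\psi}c$ is your $s = |c|e^{u}$ written multiplicatively), both bound the rational kernel by a constant governed by the angular distance of $\z$ from the ray, and both handle the arc by noting that the real part of the exponent tends uniformly to $-\infty$ while the arc has fixed length. Your treatment of the kernel is in fact slightly sharper than the paper's: you locate the interior extremum of $|(s+\lambda)/(s-\lambda)|$ at $s = |\lambda|$ and evaluate it as $|\cot(\arg\lambda/2)|$, whereas the paper simply asserts the existence of the constant $C$.

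The one step I would not let pass as written is the last sentence of your ray estimate, where you invoke the $K_0$ asymptotic ``with the $\cos\psi$ factor and any sub-exponential prefactor absorbed into $C$ for $R$ large enough.'' Since $K_0(z) \sim \sqrt{\pi/(2z)}\,e^{-z}$, the modulus bound you actually obtain is of size $(R|c|\cos\psi)^{-1/2}e^{-2\pi R|c|\cos\psi}$. Neither the promotion of $(R|c|)^{-1/2}$ to $(R|c|)^{-1}$ nor the replacement of $e^{-2\pi R|c|\cos\psi}$ by $C e^{-2\pi R|c|}$ is valid uniformly in $R$ at a fixed $\psi \neq 0$: the ratio $\sqrt{R}\,e^{2\pi R|c|(1-\cos\psi)}$ is unbounded, so no $R$-independent constant can absorb it. To be fair, the paper's own proof makes the same two jumps (it bounds by $C\int e^{-2\pi R|c|\cosh s'}\,ds'$ ``for a possibly larger constant $C$, depending also on $\eps$'' and then asserts the $\frac{C}{2\pi R|c|}$ prefactor), so this reflects an imprecision in the lemma's statement rather than a defect peculiar to your argument. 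What the rest of the paper actually uses is the exponential decay rate together with the sub-exponential nature of the prefactor, and for that purpose the honest bound $C(\psi)(R|c|)^{-1/2}e^{-2\pi R|c|\cos\psi}$ (or the special case $\psi = 0$) is entirely sufficient; I would state the conclusion in that form rather than claim the absorption.
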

\begin{proof}
We make the change of variable $\z' = -e^{s + i \psi} c$ for real $s$ and $\psi$, reducing \eqref{basicIntegral} to
\begin{equation*}
\int^{+\infty}_{-\infty} ds \frac{-e^{s + i\psi} c + \z}{-e^{s + i\psi} c - \z}\exp(-\pi R|c|(e^{-s - \log|c| - i\psi} + e^{s + \log|c| + i\psi}))
\end{equation*}
The modulus of this is bounded above by 
\begin{equation*}
C \int^{+\infty}_{-\infty} ds |\exp(-2\pi R |c| \cosh(s + \log|c|+ i\psi))|,
\end{equation*} 
where $C$ is a constant depending on the angular distance of $\z$ from $\R_{< 0} e^{i\psi} c$. For $\eps$ sufficiently small this is in turn bounded by 
\begin{equation*}
C \int^{+\infty}_{-\infty} ds' \exp(-2\pi R |c| \cosh(s')),
\end{equation*}
where $C$ is a new (possibly larger) constant, depending also on $\eps$. We recognize the integral as a Bessel function, and we find that, for $\eps \ll 1$, \eqref{basicIntegral} converges for $|\psi| < \eps$, and moreover it is actually bounded above by 
\begin{equation*}
\frac{C}{2\pi R |c|}\exp(-2\pi R |c|)
\end{equation*}
for $R$ large enough. On the other hand, with the usual change of variable \eqref{arcIntegral} becomes
\begin{equation*}
i \int^{+\eps}_{-\eps} d\psi \frac{-e^{s + i\psi}c + \z}{-e^{s + i\psi}c - \z}\exp(-\pi R|c| e^{-s - \log|c| - i\psi})\exp(-\pi R|c| e^{s + \log|c| + i\psi}). 
\end{equation*}
One can check that this vanishes for $s \to \pm \infty$ (for fixed, sufficiently small $\epsilon$).
\end{proof}
\noindent We will need to know the behaviour of the function $\Phi(\psi)_{\alpha}(\z)$ (which is holomorphic in $U$) when $\z$ crosses a BPS ray $\ell_{\gamma'}$. 
\begin{lem}\label{JumpLemma} Let $\psi \in \widetilde{\mathcal{H}}$. Fix a ray $\ell$ and $\z_0 \in \ell$, and denote by $\Phi(\psi)_{\alpha}(\z^-_0)$ the limit as $\z \to \z_0$ in the counterclockwise direction. Similarly let $\Phi(\psi)_{\alpha}(\z^+_0)$ denote the limit in the clockwise direction. Both limits $\Phi(\psi)_{\alpha}(\z^{\pm}_0)$ exist, and they are related by
\begin{equation*} 
\Phi(\psi)_{\alpha}(\z^+_0) = \Phi(\psi)_{\alpha}(\z^-_0)\prod_{Z(\gamma')\in \ell}(1 - (s, t)^{\gamma'}\psi_{\gamma'}(\z_0))^{\Omega(\gamma') \bra \gamma', \alpha\ket}. 
\end{equation*}
\end{lem}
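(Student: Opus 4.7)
\noindent\textbf{Proof plan for Lemma \ref{JumpLemma}.}

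The plan is to split the exponent of $\Phi(\psi)_{\alpha}(\z)$ into contributions from rays $\ell_{\gamma'}$ different from $\ell$, which will be continuous across $\ell$ at $\z_0$, and contributions from rays with $Z(\gamma')\in\ell$, which produce the jump via a residue. Recall from the definition of $\mathcal{H}(U, \Aut_{R_k}(\widehat{\g}))$ that whenever $\ell_{\gamma'}\subset\partial U$ is a BPS ray we are guaranteed a holomorphic extension of $\psi_{\gamma'}$ past $\ell_{\gamma'}$, so $\log(1-(s,t)^{\gamma'}\psi_{\gamma'}(\z_0))$ is unambiguously defined at $\z_0\in\ell$. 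Also, since only finitely many $\gamma'$ contribute (those with $(s,t)^{\gamma'}\neq 0$ in $R_k$) and each integral is absolutely convergent by Lemma \ref{BesselLemma}, all manipulations below are done in a finite-dimensional subspace of $\widehat{\g}$.

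First I would treat the ``easy'' rays. For $Z(\gamma')\notin\ell$, the pole of $\rho(\z,\z')$ at $\z'=\z$ never meets the contour $\ell_{\gamma'}$ when $\z$ stays in a sufficiently small neighborhood of $\z_0$ in $\C^*$. Differentiation under the integral sign (justified by Lemma \ref{BesselLemma}) shows that the integral
\begin{equation*}
I_{\gamma'}(\z) := \int_{\ell_{\gamma'}}\frac{d\z'}{\z'}\rho(\z,\z')\log(1-(s,t)^{\gamma'}\psi_{\gamma'}(\z'))
\end{equation*}
is holomorphic in $\z$ across $\z_0$, and thus makes no contribution to the jump.

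Next I would treat the $\gamma'$ with $Z(\gamma')\in \ell$. For such a $\gamma'$, note that $\rho(\z,\z')$ has a simple pole at $\z'=\z$ with residue $\z/(2\pi i)$, so the meromorphic $1$-form $\frac{d\z'}{\z'}\rho(\z,\z')\log(1-(s,t)^{\gamma'}\psi_{\gamma'}(\z'))$ has residue $\frac{1}{2\pi i}\log(1-(s,t)^{\gamma'}\psi_{\gamma'}(\z))$ at $\z'=\z$. Deforming the contour $\ell_{\gamma'}$ by a small arc around $\z_0$ (possible since $\psi_{\gamma'}$ extends holomorphically past $\ell$), we can write $I_{\gamma'}(\z)$ as an integral along a deformed contour that stays bounded away from $\z_0$; this deformed integral is holomorphic in $\z$ across $\z_0$. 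The original $I_{\gamma'}(\z^{\pm})$ is then obtained from the deformed integral by adding $\pm 2\pi i$ times the residue at $\z'=\z_0$, with the sign fixed by our orientation convention ($\z_0^-$ counterclockwise, $\z_0^+$ clockwise). A direct check yields
\begin{equation*}
I_{\gamma'}(\z_0^+) - I_{\gamma'}(\z_0^-) = \log(1-(s,t)^{\gamma'}\psi_{\gamma'}(\z_0)).
\end{equation*}

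Finally, since $f=\psi^0$ is holomorphic on all of $\C^*$, hence continuous at $\z_0$, exponentiating the jump and using that all factors in the exponential multiply via the commutative product \eqref{classProduct} gives
\begin{equation*}
\Phi(\psi)_{\alpha}(\z_0^+) = \Phi(\psi)_{\alpha}(\z_0^-)\exp\Big(\sum_{Z(\gamma')\in\ell}\Omega(\gamma')\bra\gamma',\alpha\ket \log(1-(s,t)^{\gamma'}\psi_{\gamma'}(\z_0))\Big),
\end{equation*}
which is the desired product formula. The main obstacle is bookkeeping: pinning down the orientation convention on $\ell_{\gamma'}$ and on ``counterclockwise vs clockwise'' to get the correct sign on the residue so that the jump is expressed as in the statement (rather than with the inverse factor). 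Everything else reduces to the residue theorem and the continuity/convergence properties already established.
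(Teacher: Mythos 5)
Your proposal is correct and follows essentially the same route as the paper: what you derive by deforming the contour around $\z_0$ and invoking the residue theorem is exactly what the paper gets by citing Plemelj's theorem (after expanding the logarithm over $R_k$), and the split into rays in $\ell$ versus rays off $\ell$, plus the continuity of $\psi^0$ and the final exponentiation, matches the paper's argument.
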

\begin{proof} Consider the integral
\begin{equation*}
\int_{\ell_{\gamma'}}\frac{d\z'}{\z'}\frac{\z' + \z}{\z' - \z}\log(1 - (s, t)^{\gamma'}\psi_{\gamma'}(\z')) = \sum^{k}_{p = 1}\frac{1}{p}(s, t)^{p\gamma'}\int_{\ell_{\gamma'}}d\z'\frac{1}{\z' - \z}\left(1 + \frac{\z}{\z'}\right)\psi_{p\gamma'}(\z')
\end{equation*}
($(s, t)^{p\gamma'} = 0$ in $R_k$ for some $p \leq k$). By estimates similar to those in the proof of Lemma \ref{BesselLemma} we may apply Plemelj's theorem to find
\begin{equation*}
\lim_{\z\to\z^{\pm}_0} \frac{1}{2\pi i}\int_{\ell_{\gamma'}}d\z'\frac{1}{\z' - \z}\left(1 + \frac{\z}{\z'}\right)\psi_{p\gamma'}(\z') =  \pm \psi_{p\gamma'}(\z_0) + \operatorname{pv}\frac{1}{2\pi i}\int_{\ell_{\gamma'}}\frac{d\z'}{\z'}\frac{\z' + \z_0}{\z' - \z_0}\psi_{p\gamma'}(\z'),
\end{equation*}
where $\operatorname{pv}$ denotes a (well defined, convergent) principal value integral. Therefore
\begin{align*}
\lim_{\z\to\z^{\pm}_0}\frac{1}{4\pi i}\int_{\ell_{\gamma'}}\frac{d\z'}{\z'}\frac{\z' + \z}{\z' - \z}\log(1 - &(s, t)^{\gamma'}\psi_{\gamma'}(\z')) = \pm \frac{1}{2}\log(1 - (s, t)^{\gamma'}\psi_{\gamma'}(\z_0))\\ 
&+ \operatorname{pv}\frac{1}{4\pi i}\int_{\ell_{\gamma'}}\frac{d\z'}{\z'}\frac{\z' + \z_0}{\z' - \z_0}\log(1 - (s, t)^{\gamma'}\psi_{\gamma'}(\z')),
\end{align*}
where the last principal value integral is convergent. The result now follows easily.
\end{proof}

\noindent Fixed points of $\Phi$ (acting on $\widetilde{\mathcal{H}}$) are solutions of the \emph{TBA type equation} 
\begin{equation}\label{TBA}
\psi = \Phi(\psi). 
\end{equation}
(see \cite{gmn} equation 5.13). 

We recall that solutions of \eqref{TBA} are in fact solutions of a Riemann-Hilbert factorization problem for the group $\Aut_R(\widehat{\g})$. Consider the standard dilogarithm
\begin{equation*}
\operatorname{Li}_2((s, t)^{\alpha} e_{\alpha}) = -\sum_{j \geq 1} \frac{(s, t)^{j \alpha} e_{j\alpha}}{j^2},
\end{equation*}
as an element of $\widehat{\g}$. Using the Lie algebra structure \eqref{classBracket}, one checks that 
\begin{equation}\label{classAction}
\Ad \exp(\Omega \operatorname{Li}_2((s, t)^{\alpha} e_{\alpha}))(e_{\beta}) = e_{\beta} (1 - (s, t)^{\alpha} e_{\alpha})^{\Omega \bra \alpha, \beta\ket}.
\end{equation}
\begin{cor}\label{RH} A solution of \eqref{TBA} solves the Riemann-Hilbert factorization problem with respect to the contour given by the rays $\ell \in \{\ell_{\alpha} : \Omega(\alpha) \neq 0\}$ and Stokes (i.e. jump) factors \begin{equation*}
S_{\ell} = \prod_{Z(\alpha) \in \ell}\Ad \exp(\Omega(\alpha) \operatorname{Li}_2((s, t)^{\alpha} e_{\alpha})).
\end{equation*}
\end{cor}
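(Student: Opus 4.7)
The plan is to show that $\psi(\z_0^+) = \psi(\z_0^-) \circ S_\ell$ as elements of $\Aut_{R_k}(\widehat{\g})$, for each BPS ray $\ell$ and each $\z_0 \in \ell$. Since an automorphism of $\widehat{\g}$ is determined by its action on the generators $e_\alpha$, it suffices to verify that $\psi_\alpha(\z_0^+) = \psi(\z_0^-)(S_\ell(e_\alpha))$ for every $\alpha \in \Gamma$.

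First, because $\psi$ solves \eqref{TBA}, Lemma \ref{JumpLemma} already supplies an explicit jump formula,
\begin{equation*}
\psi_\alpha(\z_0^+) = \psi_\alpha(\z_0^-)\prod_{Z(\gamma')\in\ell}\bigl(1-(s,t)^{\gamma'}\psi_{\gamma'}(\z_0)\bigr)^{\Omega(\gamma')\bra\gamma',\alpha\ket},
\end{equation*}
and the right-hand side is well-defined in $\widehat{\g}$ because $\psi_{\gamma'}$ extends holomorphically across $\ell_{\gamma'}$ by definition of $\mathcal{H}$, so $\psi_{\gamma'}(\z_0) = \psi_{\gamma'}(\z_0^-)$. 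The task is therefore to match this product with $\psi(\z_0^-)(S_\ell(e_\alpha))$.

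Next I would compute $S_\ell(e_\alpha)$ in closed form. By the standing assumption on $\bra -, -\ket$, all pairs $\gamma', \gamma''$ with $Z(\gamma'), Z(\gamma'') \in \ell$ satisfy $\bra \gamma', \gamma''\ket = 0$; by \eqref{classBracket} the elements $\operatorname{Li}_2((s,t)^{\gamma'} e_{\gamma'})$ then pairwise commute in $\widehat{\g}$, hence so do the corresponding $\Ad\exp$ automorphisms. Thus $S_\ell$ is an unambiguous product, and one can apply the single-factor identity \eqref{classAction} iteratively; the would-be nested corrections at each step carry a factor $\bra\gamma',\gamma''\ket = 0$ and drop out, yielding
\begin{equation*}
S_\ell(e_\alpha) = e_\alpha \prod_{Z(\gamma')\in\ell}\bigl(1-(s,t)^{\gamma'} e_{\gamma'}\bigr)^{\Omega(\gamma')\bra\gamma',\alpha\ket}.
\end{equation*}

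Finally, applying the algebra automorphism $\psi(\z_0^-)$ to both sides (and expanding the logarithms/powers first in $R_k$, which is legitimate since only finitely many $\gamma'$ contribute modulo $\mathfrak{m}_{R_k}^{k+1}$), one gets $\psi(\z_0^-)(S_\ell(e_\alpha)) = \psi_\alpha(\z_0^-)\prod(1-(s,t)^{\gamma'}\psi_{\gamma'}(\z_0^-))^{\Omega(\gamma')\bra\gamma',\alpha\ket}$, which coincides with the jump formula above. This establishes $\psi(\z_0^+) = \psi(\z_0^-) \circ S_\ell$ and gives the Riemann-Hilbert factorization with the stated Stokes factors. The only step requiring genuine care is obtaining the closed form for $S_\ell(e_\alpha)$: the vanishing of the nested cross-terms, and hence the commutativity of the constituent $\Ad\exp$ factors, rests entirely on the standing assumption that $\bra -, -\ket$ vanishes on classes sharing a slope. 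Everything else is bookkeeping between Lemma \ref{JumpLemma} and \eqref{classAction}.
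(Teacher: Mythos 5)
Your proposal is correct and follows the same route as the paper's own (very terse) proof: apply Lemma \ref{JumpLemma} for the jump of $\Phi(\psi)_\alpha = \psi_\alpha$ across a ray, and match it against \eqref{classAction} applied once per constituent $\Ad\exp$ factor, using the commutative algebra structure \eqref{classProduct} to pass $\psi(\z_0^-)$ through the product. The only things you spell out that the paper leaves implicit are the commutativity of the $S_\ell$ factors (via the standing slope assumption on $\bra -,-\ket$) and the holomorphic extension of $\psi_{\gamma'}$ across $\ell_{\gamma'}$; both are correct and useful to record.
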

\begin{proof} This follows immediately from Lemma \ref{JumpLemma} and \eqref{classAction}, by the definition \eqref{TBop} of $\Phi$. Notice that this uses the commutative algebra structure \eqref{classProduct} on $\widehat{\g}$.
\end{proof}
In fact we are only interested in solutions of \eqref{TBA} which are obtained by iteration from $\psi^0$: these are analogues of the iterative solution considered in ibid. Appendix C. Consider the sequence $\psi^{(i)} = \Phi^{(i)}(\psi^0)$ for $i \geq 0$, where $\Phi^{(i)} = \Phi \circ \cdots \circ \Phi$ ($i$ times). To compute $\psi^{(i)}$ we start by rewriting (for all $\psi$)
\begin{align*}  
\sum_{\gamma'} \Omega(\gamma') \log(1 - (s, t)^{\gamma'}\psi_{\gamma'}(\z')) \gamma' &= \sum_{\gamma'}\Omega(\gamma') \sum_p \frac{(s,t)^{p\gamma'}}{p^2} \psi_{p\gamma'}(\z') p\gamma'\\
&= \sum_{\gamma'} (s, t)^{\gamma'} \psi_{\gamma'}(\z') \dt(\gamma') \gamma',
\end{align*}
where
\begin{equation*}
\dt(\gamma') = \sum_{p > 0,\,p | \gamma'} \frac{\Omega(p^{-1}\gamma')}{p^2}.
\end{equation*}
\begin{rmk} The notation DT reflects the usual way in which BPS state counts $\Omega$ are related to Donaldson-Thomas invariants.
\end{rmk}

\noindent So we can rewrite \eqref{TBop} as
\begin{equation*}
\Phi(\psi)_{\alpha}(\z) = \psi^0_{\alpha}(\z)\exp\left(\sum_{\gamma'} \bra \dt(\gamma')\gamma', \alpha \ket \int_{\ell_{\gamma'}}\frac{d\z'}{\z'}\rho(\z, \z')(s, t)^{\gamma'}\psi_{\gamma'}(\z')\right),
\end{equation*}
and we see
\begin{align}\label{TBinduction}
\nonumber \psi^{(i)}_{\alpha}(\z) = \psi^0_{\alpha}(\z)&\sum_{\mathbf{p}}\frac{1}{\mathbf{p}!}\\
&\prod \left(\bra \dt(\gamma'_j)\gamma'_j, \alpha \ket \int_{\ell_{\gamma'_j}}\frac{d\z'}{\z'}\rho(\z, \z')(s, t)^{\gamma'_j}\psi^{(i-1)}_{\gamma'_j}(\z')\right)^{\mathbf{p}_j}, 
\end{align}
where we sum over ordered partitions $\mathbf{p}$, and we take the product over all unordered collections $\{\gamma'_1, \ldots, \gamma'_l\}\subset\Gamma$ for $l$ the length of $\mathbf{p}$. Let us denote by $\T$ a connected rooted tree, decorated by elements $\gamma' \in \Gamma$ (i.e. there is a map from the vertex set $\T^{[0]}$ to $\Gamma$). We denote by $\alpha(v)$ the decoration at $v\in\T^{[0]}$, and introduce a factor 
\begin{equation*}
\W_{\T} = (-1)^{|\T^{[1]}|}\frac{\dt(\gamma_{\T})}{|\Aut(\T)|} \prod_{v \to w} \bra \alpha(v), \dt(\alpha(w))\alpha(w)\ket,
\end{equation*}
where $\gamma_{\T}$ denotes the label of the root of $\T$, and $\Aut(\T)$ is the automorphism group of $\T$ as a decorated, rooted tree. We write $c(\T)$ for the sum $\sum_{v\in\T^{[0]}}\alpha(v)$. To each $\T$ we also attach a ``propagator" $\G_{\T}(\z)$ which is a holomorphic function on $U$ with values in $\widehat{\g}$, defined recursively by
\begin{equation}\label{propagators}
\G_{\T}(\z) = \int_{\ell_{\gamma_{\T}}} \frac{d\z'}{\z'}\rho(\z, \z') \psi^0_{\gamma_{\T}}(\z')\prod_{\T'}\G_{\T'}(\z'),
\end{equation}
where $\{\T'\}$ denotes the set of (connected, rooted, decorated) trees obtained by removing the root of $\T$ (setting $\G_{\emptyset}(\z) = 1$). By applying \eqref{TBinduction} inductively we obtain
\begin{equation*}
\psi^{(i)}_{\alpha}(\z) = \psi^0_{\alpha}(\z)\sum \prod_j \bra \alpha, \gamma_{\T_j}\W_{\T_j}(s, t)^{c(\T_j)}\G_{\T_j}(\z)\ket,  
\end{equation*} 
where we sum over all collections of trees $\{\T_1, \ldots, \T_l\}$ as above, with depth at most $i$. By the definition of the base ring $R_k$, we see that the sequence $\psi^{(i)}$ stabilizes for $i \gg 1$ to a solution $\psi^{\infty}$ of \eqref{TBA}, given explicitly by 
\begin{equation}\label{iterativeSol}
\psi^{\infty}_{\alpha}(\z) = \psi^0_{\alpha}(\z)\exp \bra \alpha, -\sum_{\T} (s,t)^{c(\T)}\gamma_{\T}\W_{\T}\G_{\T}(\z)\ket, 
\end{equation} 
where we sum over all tree of arbitrary depth: however only a finite number give a nonvanishing contribution. This is the analogue of \cite{gmn} equation (C.26) (see also \cite{neitzke} equation (4.12)). 

Abusing slightly the notation of \cite{gmn}, we say that the function $\W_{\T}\G_{\T}(\z)$ is the \emph{instanton correction} attached to a tree $\T$. 

\begin{rmk} In the context of \cite{gmn} these functions encode the corrections to the metric on the moduli space obtained from the $\Xsf_{\alpha}(m, R; \z)$; the corrections are specified by the BPS spectrum and \eqref{TBA}. In the ``1-particle case" of a single vertex $\{\bullet\}$, the coefficients of the Laurent expansion of $\G_{\bullet}(\z)$ can be computed exactly in terms of Bessel functions. One then recovers the Ooguri-Vafa formula for a local hyperk\"ahler metric (see ibid. section 4.3).
\end{rmk}

\noindent Our main concern is the dependence of the operator $\Phi$ (and in particular of our solution $\psi^{\infty}$ to \eqref{TBA}) on the parameter $Z$. Fix central charges $Z^-, Z^+$. Suppose that all the $\gamma_i$ ($\eta_j$) have a single common charge $Z^{\pm}(\gamma_i)$, independent of $i$ (respectively $Z^{\pm}(\eta_j)$, independent of $j$). Let $Z^+(\gamma_i)$ precede $Z^+(\eta_j)$ in the clockwise order, in the first quadrant, while the opposite happens for $Z^-(\gamma_i), Z^-(\eta_j)$. Moreover we assume (without loss of generality) that the cone spanned by $Z^{+}(\gamma_i), Z^{+}(\eta_j)$ contains strictly the cone spanned by $Z^{-}(\gamma_i), Z^{-}(\eta_j)$. We interpolate between $Z^+$ and $Z^-$ with the family $Z^{\tau} = (1 - \tau) Z^+ + \tau Z^-$ (for $\tau \in [0, 1]$). There is a unique $\tau_0 \in [0, 1]$ for which $Z^{\tau_0}(\Gamma)$ is contained in a real line in $\C$.  

Let us prescribe a BPS spectrum for $Z^\tau$, $\tau < \tau_0$ by $\Omega_{Z^\tau}(\pm\gamma_i) = \Omega_{Z^\tau}(\pm\eta_j)=1$ for all $i, j$, while $\Omega$ vanishes on the rest of $\Gamma$. From this we construct our solution $\psi^{\infty}(\z, Z^\tau)$ to \eqref{TBA} for a fixed $\tau < \tau_0$; it is defined in an open set $U^\tau$. Similarly, given a \emph{constant} BPS spectrum $\Omega_{Z^\tau}$ for $\tau > \tau_0$, we obtain solutions $\psi^{\infty}(\z, Z^{\tau})$ on open sets $U^\tau$ ($\tau > \tau_0$).\\ 

\noindent\textbf{Basic problem.} Choose a point $\z$ which lies outside the cone $\bigcup_\tau U^\tau$. Solve for a spectrum $\Omega_{Z^-}$ such that the family of automorphisms $\psi(\z, Z^\tau)$ (which is defined for $\tau \in [0, 1]\setminus\{\tau_0\}$) extends to an element of $C^0([0,1], \Aut_{R_k}(\widehat{\g}))$ for all $R \gg 1$. Let us recall the main results about this problem.
\begin{enumerate}
\item There are always solutions $\Omega_{Z^-}$. Moreover if $(s, t)^{\alpha} \neq 0$ in $R_k$ then $\Omega_{Z^-}(\alpha)$ is unique (and independent of $k$). This follows from the wall-crossing theory for BPS states and standard results about finite-dimensional Riemann-Hilbert factorization problems. In the following we will always write $\Omega_{Z^-}(\alpha)$ for this uniquely determined value (i.e. for $k \gg 1$). 
\item The solution $\Omega_{Z^-}$ can be expressed as a sum over trees $\T$ as above; the contribution\footnote{When $\gamma'$ is not primitive the contribution of $\T$ to $\Omega_{Z^-}(\gamma')$ is not (in general) a number, but rather a more complicated object (a ``singular integral"); such contributions combine to give a genuine $\Q$-valued contribution.} of a tree $\T$ depends only on the instanton correction $\W_{\T}\G_{\T}(\z)$ for $\tau < \tau_0$ (in particular, it vanishes if $\W_{\T}\G_{\T}(\z) \equiv 0$). This is explained in detail in \cite{jacopo}, and it will be implicitely reviewed in what follows. 
\end{enumerate}   
We will refer to the contribution of $\T$ to $\Omega_{Z^-}$ as its \emph{instanton contribution}. As discussed in \cite{jacopo} the set of all instanton contributions is a rather intricate combinatorial object.  The problem we consider here is to obtain a better understanding of the structure of this set. We will show that there is indeed a finer structure, which has a tropical nature. In particular instanton contributions encode (in a very natural way) a set of tropical invariants $N^{\rm trop}({\bf w})$.

\section{Instanton contributions and tropical curves}\label{tropicalSection} We follow the notion of (rational) \emph{plane tropical curve} given in \cite{gama} Definition 2.3. Thus a curve for us means a triple $(\Upsilon, \omega, h)$ where $(\Upsilon, \omega)$ is a connected, simply connected, weighted graph (with a subset of unbounded edges), and $h\!:\Upsilon \to \R^2$ is a proper map from (the topological realization of) $\Upsilon$ to the real plane, such that for each edge $E$, $h(E)$ lies in a line of rational slope. Most importantly, fixing a trivalent vertex $V \in \Upsilon^{[0]}$, with incident edges $E_i$ and $m_V(E_i)$ integral primitive vectors outgoing from $h(V)$ in the direction of $h(E_i)$, we require the \emph{balancing condition}
\begin{equation*}
\sum_{i} \omega(E_i) m_V(E_i)= 0.
\end{equation*}
Similarly we adopt the notion of \emph{degree} for a tropical curve given in ibid. Definition 2.6, as an element of the free abelian group generated by $\Z^2\setminus\{(0,0)\}$ (with addition $\oplus$). We also parallel the definition of the \emph{combinatorial type of a tropical curve} (for brevity, a \emph{tropical type}) contained in ibid. Definition 3.3, with the proviso that we work with unmarked curves. Thus the tropical type $[\Upsilon]$ is the data of the graph $\Upsilon$ and, for each vertex $V$, the triple $\{\omega(E_i) m_V(E_i)\}$ (underlying an actual tropical curve $(\Upsilon, \omega, h)$).\\ 

\noindent  For the type of curves we are interested in we can think of a tropical degree alternatively as a \emph{weight vector} ${\bf w} = ({\bf w}_1, {\bf w}_2)$, where each ${\bf w}_i$ is a collection of integers $w_{ij}$ (for $1 \leq i \leq 2$ and $1 \leq j \leq l_i$) such that  $1 \leq w_{i1} \leq w_{i2} \leq \dots \leq w_{i{l_i}}$. For  $1 \leq j \leq l_1$ choose a general collection of parallel lines $\mathfrak{d}_{1j}$ in the direction $(0, 1)$, respectively $\mathfrak{d}_{2j}$ in the direction $(1, 0)$ for $1 \leq j \leq l_2$. We attach the weight $w_{ij}$ to the line $\mathfrak{d}_{ij}$, and think of the lines $\mathfrak{d}_{ij}$ as ``incoming" unbounded edges for connected, rational tropical curves $\Upsilon \subset \R^2$. We prescribe that such curves $\Upsilon$ have a single additional ``outgoing" unbounded edge in the direction $(|{\bf w}_1|,|{\bf w}_2|)$. Let us denote by $\mathcal{S}(\mathfrak{d}, {\bf w})$ the finite set of such tropical curves $\Upsilon$ (for a general, fixed choice of ends $\mathfrak{d}_{ij}$). We denote by $N^{\rm trop}({\bf w}) = \#^{\mu} \mathcal{S}(\mathfrak{d}, {\bf w})$ the tropical count of curves $\Upsilon$ as above, i.e. the number of elements of $\mathcal{S}(\mathfrak{d}, {\bf w})$ counted with the usual multiplicity $\mu$ of tropical geometry (see \cite{mikhalkin}). It is known that $\#^{\mu} \mathcal{S}(\mathfrak{d}, {\bf w})$ does not depend on the general choice of unbounded edges $\mathfrak{d}_{ij}$ (see \cite{mikhalkin}, \cite{gama}).\\ 

\noindent Fix a (decorated, rooted) tree $\T$. The \emph{degree of} $\T$ is $\deg(\T) = \oplus_{v \in \T^{[0]}} \pi(\alpha(v))$, an element of the free abelian group generated by $\Z^2\setminus\{(0,0)\}$. Notice that we can also think of $\deg(\T)$ alternatively as a weight vector ${\bf w}$. The \emph{charge} of $\T$ is $c(\T) = \sum_{v \in \T^{[0]}} \alpha(v)\in \Gamma$ (as before), and the \emph{reduced charge} of $\T$ as $\bar{c}(\T) = \sum_{v \in \T^{[0]}} \pi(\alpha(v)) \in \Z^2$. Recall that a \emph{labelling} of a tree $\T$ is a bijection $\nu\!:  \{1, \ldots, |\T^{[0]}|\} \to \T^{[0]}$. We allow labellings which are not compatible with the order on $\T$ induced by the choice of a root, namely we do \emph{not} require that if $\nu(i) \to \nu(j)$ in $\T$ then $i < j$.\\

\noindent We regard the instanton correction associated to $\T$ as a function of $\tau \in [0,1]\setminus\{\tau_0\}$, namely $\W_{\T}(\tau)\G_{\T}(\z; \tau)$. The $\tau$ dependence of $\W_{\T}$ comes from thinking of the BPS spectrum as a (locally constant) function of $\tau$, $\Omega_{Z^{\tau}}$. Notice first that $\Omega_{Z^-}(\gamma_i) = 1$. This can be seen by looking at the contribution of a singleton $\{\bullet\}$ labelled by $\gamma_i$ to $\sum_{\T} \W_{\T}\G_{\T}(\z)$, given by $\Omega(\gamma_i, \tau)\gamma_i\int_{\ell_{\gamma_i}}\frac{d\z'}{\z'}\rho(\z, \z')\psi^0_{\gamma_i}(\z', \tau)$. By Lemma \ref{BesselLemma}, choosing $R$ large enough we see that a necessary condition for the continuity of $\psi(\z, Z^{\tau})$ across $\tau_0$ is that the integral is continuous through $\tau_0$, so $\Omega(\gamma_i)$ cannot jump. Similarly $\Omega_{Z^-}(\eta_j) = 1$. It follows that $\W_{\T}(\tau)$ is actually constant. For example in the simple case when all the vertices are labelled by $\gamma_i$ or $\eta_j$, and if we assume without loss of generality that $\gamma_{\T}$ is a (positive) multiple of $\gamma_i$, then $\W_{\T}(\tau) \equiv \pm \frac{\gamma_i}{|\Aut(\T)|}$.

On the other  hand $\G_{\T}(\z)$ depends on $\tau$ both through the integration rays $\ell_{\alpha}(Z^{\tau})$ and the integrands involving $\psi^0_{\alpha}(\z; \tau)$. Suppose that $\W_{\T}(\tau)\G_{\T}(\z; \tau)$ does not vanish for $\tau < \tau_0$. By the definition of $\Omega_{Z^+}$ the vertices of $\T$ are decorated by elements of $\Z \gamma_i$ or $\Z \eta_j$. In the following we assume that they are actually decorated by $\N \gamma_i$ or $\N \eta_j$, and refer to $\T$ as a \emph{positive} tree; this is not restrictive for our purposes. We will also assume that $\gamma_{\T}$ is a (positive) multiple of $\gamma_i$.\\

\noindent Let us fix a sufficiently small $\eps > 0$ and consider the jump 
\begin{equation*}
\G_{\T}(\z, \tau_0 - \eps) - \G_{\T}(\z, \tau_0 + \eps) = J_{\T}(\eps, R) e_{c(\T)}.
\end{equation*}
To compute this we would like first to move all the integration rays $\ell^{+}_{\alpha} = \ell_{\alpha}(\tau_0 - \eps)$ in $\G_{\T}(\z, \tau_0 - \eps)$ to the corresponding rays $\ell^-_{\alpha} = \ell_{\alpha}(\tau_0 + \eps)$. This is of course problematic since in moving the ray $\ell^{+}_{\alpha(v)}$ corresponding to some $v \in \T^{[0]}$ we may cross one or more of the integration rays $\ell^{+}_{\alpha(w)}$ where $v \to w$ or $w \to v$ are edges of $\T$.
\begin{lem}\label{LemmaA} Fix a tree $\T$ and a labelling $\nu$ as above. Suppose that $\bar{c}(\T)$ is primitive.
\begin{enumerate}
\item There is a natural construction (depending on $\nu$) that associates to $\G_{\T}(\z)$ and the critical value $\tau_0$ a set of combinatorial types $\{[\Upsilon_j]\}_{\T}$ of plane rational tropical curves of degree $\deg(\T)$, together with signs $(-1)^{[\Upsilon_j]} \in \{\pm 1\}$. For suitable $\nu$ these combinatorial types (as $\T$ varies) all belong to a single set $[\mathcal{S}(\mathfrak{d}, \deg(\T))]$.
\item $J_{\T}(\tau^{\pm}, R)$ has an asymptotic expansion, as $R \to +\infty$,
\begin{align*}
J_{\T}(\eps, R) = &\sum_{\{[\Upsilon_j]\}_{\T}} (-1)^{[\Upsilon_j]} f(\z, R)\\ &+ o_{\eps}(\frac{1}{\pi |Z^{\tau_0}(c(\T))| R} e^{-\pi |Z^{\tau_0}(c(\T))| R}) + O(\eps),  
\end{align*}
where the $o_{\eps}$ error term is for fixed, sufficiently small $\eps > 0$, the $O(\eps)$ term is uniform in $R \geq 1$ as $\eps \to 0$, and we have, nontangentially to the boundary of the cone spanned by $Z^+(\pm \gamma_i), Z^+(\pm \eta_j)$, 
\begin{equation*}
\lim_{\z \to 0} f(\z, R) \sim \frac{1}{\pi |Z^{\tau_0}(c(\T))| R}e^{-\pi |Z^{\tau_0}(c(\T))| R}
\end{equation*}
for $R \to +\infty$ (holding uniformly for $\eps > 0$ sufficiently small).
\end{enumerate}
\end{lem}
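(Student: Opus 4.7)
The plan is to compute the jump $J_{\T}(\eps, R)\, e_{c(\T)} = \G_{\T}(\z, \tau_0 - \eps) - \G_{\T}(\z, \tau_0 + \eps)$ by deforming, one vertex at a time in the order prescribed by $\nu$, each integration contour $\ell^+_{\alpha(v)}$ appearing in $\G_{\T}(\z, \tau_0 - \eps)$ toward the corresponding contour $\ell^-_{\alpha(v)}$. Across $\tau_0$ only the slopes of $Z(\gamma_i), Z(\eta_j)$ and their $\N$-combinations swap, so each deformation either sweeps through no pole at all, or sweeps through simple poles of the kernels $\tfrac{\z_v + \z_{v'}}{\z_v - \z_{v'}}$ attached to edges $\{v, v'\}$ of $\T$ (possibly including $v' = i_0$, where $\z_{i_0} = \z$). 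The primitivity of $\bar c(\T)$ ensures that $Z^{\tau_0}(\alpha(v))$ for distinct subsums $\sum \alpha(v)$ remain generic enough for these poles to be simple.

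Part (1) is then built recursively. Each time a contour is deformed across a pole $\z_v = \z_{v'}$, applying the residue theorem removes one integration variable and effectively contracts the edge $\{v, v'\}$ of $\T'$ into a single vertex carrying label $\alpha(v) + \alpha(v')$; the remaining product of propagators continues to depend on the new merged variable. After exhausting the order $\nu$, every surviving term corresponds to a sequence of edge contractions. Interpreting each contracted connected subtree as an internal (trivalent, by the step-by-step nature of the construction) vertex, each uncontracted edge of $\T'$ as a bounded edge, the vertex labels $\alpha(v)$ as incoming ends in directions $-\pi(\alpha(v))$, and the charge $c(\T)$ as a single outgoing end in direction $\bar c(\T)$, one obtains a plane rational tropical curve $\Upsilon_j$ of degree $\deg(\T)$, automatically satisfying the balancing condition at each merging step. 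The signs $(-1)^{[\Upsilon_j]}$ are read off from the orientations of the residue crossings combined with the $\tfrac{1}{2\pi i}$ prefactors. By choosing $\nu$ compatibly with the root-to-leaf orientation and with the grouping of same-slope vertices (roughly as in \cite{gps} Theorem 2.8), the resulting types fall inside the single set $[\mathcal{S}(\mathfrak{d}, \deg(\T))]$.

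For part (2), each resulting term is estimated by Lemma \ref{BesselLemma}. A fully contracted term has all $|\T'^{[0]}| - 1$ internal edges of $\T'$ turned into residues, leaving a single integral of the basic form \eqref{basicIntegral} with charge $c(\T)$; by Lemma \ref{BesselLemma} this contributes, to leading order, exactly the function $f(\z, R)$ with the stated asymptotic $\tfrac{1}{\pi |Z^{\tau_0}(c(\T))| R} e^{-\pi |Z^{\tau_0}(c(\T))| R}$. A term in which at least one edge is not contracted keeps two or more nested Bessel-type integrals, and the triangle inequality applied to the central charges in the exponents gives a strictly faster exponential decay, which is absorbed in the $o_\eps$ remainder. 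The $O(\eps)$ term is the contribution of the fully-deformed, no-residue piece: after moving all contours to $\ell^-_{\alpha(v)}$ positions the integrand is still evaluated at $\tau^-$ rather than $\tau^+$, but the difference of the two integrands is uniformly $O(\eps)$ and all integrands admit a common $R \geq 1$ dominating function, by the estimates of Lemma \ref{BesselLemma} applied to the continuous family $Z^\tau$.

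The main obstacle lies in part (1), namely in a careful combinatorial verification that the tropical graphs produced are connected, trivalent, and that one can consistently orient the labelling $\nu$ so that all $[\Upsilon_j]$ land in a \emph{single} $[\mathcal{S}(\mathfrak{d}, \deg(\T))]$. This is precisely where primitivity of $\bar c(\T)$ is used: without it, different orderings of the residues could produce types with multiply-wrapped outgoing end, and connectedness cannot always be ensured (this is the difficulty alluded to in the Remark after Theorem B). The argument here is a close adaptation of the scattering-diagram analysis of \cite{gps} Theorem 2.8 to our tree-of-integrals setting.
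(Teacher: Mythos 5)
Your overall strategy is the same as the paper's: deform each integration ray $\ell^{+}_{\alpha(v)}$ to $\ell^{-}_{\alpha(v)}$ in the order dictated by $\nu$, pick up residue terms whenever a pole of a kernel factor is crossed, isolate the fully-collapsed terms (a single integral over a ray of charge $c(\T)$) as the leading-order contribution via Lemma~\ref{BesselLemma}, bound the partially-collapsed terms by the strict triangle inequality $|Z^+(\alpha)|+|Z^+(c(\T)-\alpha)|>|Z^{\tau_0}(c(\T))|$, and get the $O(\eps)$ piece from the single everything-moved-no-residue term by continuity of $\psi^0_\alpha$ across $\tau_0$. This matches the paper's ``Construction of $\mathfrak{T}$'' plus ``Estimate for $J_{\T}(\eps,R)$'' blocks essentially step for step, with your contour-deformation narrative replacing the paper's explicit bookkeeping tree $\mathfrak{T}$.

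However, there is a genuine gap in your Part (1), namely in how the tropical type is actually extracted. You propose to read it off by ``interpreting each contracted connected subtree as an internal trivalent vertex, each uncontracted edge of $\T'$ as a bounded edge, the vertex labels $\alpha(v)$ as incoming ends\ldots.'' This is inconsistent with the fact, which you correctly note later, that the leading-order terms are precisely the \emph{fully} contracted ones (where all $|\T'^{[0]}|-1$ edges have been turned into residues). For such a term there is a single contracted subtree (all of $\T'$) and no uncontracted edges left, so your prescription collapses to a single vertex with no bounded edges, and there is no trivalent tree to speak of. What actually encodes the tropical type is not the final contracted state but the entire \emph{history} of residue operations: in the paper's construction one records the sequence of intermediate trees $\T_0=\T,\T_1,\dots,\T_N=\{\bullet\}$ together with the maps $\varphi_i:\T^{[0]}_i\to\T^{[0]}_{i+1}$ (each either a bijection or a two-to-one merge), assembles this data into the auxiliary graph $\widetilde{\Upsilon}$ with vertex set $\bigcup_i\T^{[0]}_i$, and then collapses its $2$-valent chains to obtain the trivalent $\Upsilon$. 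The balancing condition and the degree $\deg(\T)$ then come from the merge steps, with $\{\pi(-\alpha_1),\pi(-\alpha_2),\pi(\alpha_1+\alpha_2)\}$ at each trivalent vertex. Without this history-tree device your description does not produce a well-defined plane tropical graph.

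A secondary but real discrepancy: you attribute the use of primitivity of $\bar c(\T)$ to making poles simple and, loosely, to connectedness. In the paper the hypothesis is used concretely in the $o_\eps$ estimate: for a partially-collapsed leaf $u$ with $|\T^{[0]}_u|>1$ one may well have $\ell^-_{\alpha(v)}=\ell^-_{\alpha(w)}$ along some edges, but primitivity guarantees at least one edge $w'\to v'$ with $\ell^-_{\alpha(v')}\neq\ell^-_{\alpha(w')}$, which is exactly what makes a product of two Bessel estimates available and forces the strict inequality in the exponent. You should rework the primitivity discussion accordingly, and make the Part~(1) construction explicit along the lines of the merging-history tree.
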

\begin{proof} We start by describing the construction in (1). We will encode the precise way in which $\G_{\T}$ jumps as $\tau$ crosses the critical value $\tau_0$ using again a rooted tree $\mathfrak{T}$. A vertex $u \in \mathfrak{T}^{[0]}$ will correspond to a rooted, decorated, labelled tree $\T_{u}$ obtained as a ``contraction" of the root tree $\mathfrak{T}_0 = \T$ (which correspond to successive applications of the Fubini and residue theorems to $\G_{\T}$). Thus each $\T_u$ parametrizes in turn an iterated integral $\G_u$.\\

\noindent\textbf{Construction of $\mathfrak{T}$.} This is a variant of the iterative process described informally in \cite{jacopo} section 3.2. In particular $\mathfrak{T}$ is obtained inductively from a sequence of trees $\mathfrak{T}_i$, stabilizing to $\mathfrak{T} = \mathfrak{T}_{\infty}$. In what follows the functions $\psi^0$ are always evaluated at $\tau_0 - \eps$. 

The first step is moving the ray $\ell^+_{\alpha(\nu(1))}$ corresponding to $\nu(1) \in \T^{[0]}$ to $\ell^-_{\alpha(\nu(1))}$. By our conventions for $\ell^{+}_{\gamma_i}, \ell^+_{\eta_j}$ and our choice of $\z$ this can be done without picking up residue terms even when $\nu(1)$ is the root of $\T$. Thus $\mathfrak{T}_1$ has the form $u \to u'$, with $\T_{u'} = \T$, but with the factor in $\G_{\T_{u'}}$ corresponding to $v = \nu(1)$ replaced with (writing $v'$ for the unique $v' \to v$ in $\T$)
\begin{equation*}
\int_{\ell^-_{\alpha(v)}} \frac{d\z_v}{\z_v}\rho(\z_{v'}, \z_v)\psi^0_{\alpha(v)}(\z_v).
\end{equation*} 
In particular the leaf of $\mathfrak{T}_1$ is just $\T$ and so it is labelled by our choice $\nu$. We proceed by induction on $i$ and choose a leaf $u \in \mathfrak{T}^{[0]}_i$. We will construct a tree $\mathfrak{T}'_i$, depending on $u$, obtained by extending $\mathfrak{T}_i$ at $u$. We will then define $\mathfrak{T}_{i+1}$ as the union of all $\mathfrak{T}'_i$ as $u$ varies in the leaves of $\mathfrak{T}_i$. By induction $u$ corresponds to a labelled, decorated tree $\T_u$, parametrizing an iterated integral $\G_u$, such that there is a natural bijective correspondence between the factors 
\begin{equation}\label{pieceIntegral}
\int_{\ell} \frac{d\z_v}{\z_v}\rho(\z_{v'}, \z_v)\psi^0_{\alpha(v)}(\z_v)
\end{equation}
(for some ray $\ell \subset \C^*$) appearing in $\G_u$ and the set of vertices $v \in \T^{[0]}_u$ (with $v' \to v$). Indeed this property certainly holds for $\mathfrak{T}_1$, and it is preserved by the inductive step we are going to perform.\\ 
\begin{rmk} In \eqref{pieceIntegral} we may have $\z_{v'} \in \ell$, in the sense that we allow factors of the form
\begin{equation*}
\lim_{\ell' \to \ell} \int_{\ell'} \frac{d\z_v}{\z_v}\rho(\z_{v'}, \z_v)\psi^0_{\alpha(v)}(\z_v)
\end{equation*} 
where $\z_{v'} \in \ell$. However in this case \eqref{pieceIntegral} will be decorated with the direction in which $\ell'$ approaches $\ell$, using $\ell' \to^{\pm} \ell$ for the clockwise (respectively counterclockwise).
\end{rmk}

\noindent We say that a ray $\ell$ \emph{separates} rays $\ell', \ell''$ if they are all contained in the sector generated by $\ell^+_{\gamma_i}, \ell^+_{\eta_j}$, and $\ell', \ell''$ lie in two different connected components of the complement of $\ell$ in this sector.\\ 
\begin{rmk}
We allow the limiting case in which $\ell' \to \ell$ in a different component from $\ell''$, or possibly $\ell'\to\ell$ and $\ell''\to\ell$ in different components. 
\end{rmk}

\noindent Consider the set of vertices $v \in \T^{[0]}_u$ for which one of the following occurs:
\begin{enumerate}
\item the corresponding factor in $\G_u$ is of the form 
\begin{equation*}
\int_{\ell^+_{\alpha(v)}} \frac{d\z_v}{\z_v}\rho(\z_{v'}, \z_v)\psi^0_{\alpha(v)}(\z_v) 
\end{equation*}
where $\alpha(v)$ is a positive multiple of $\gamma_i$ or $\eta_j$, or
\item it is of the form
\begin{equation*}
\int_{\ell} \frac{d\z_v}{\z_v}\rho(\z_{v'}, \z_v)\psi^0_{\alpha(v)}(\z_v)
\end{equation*}
for some $\ell \subset \C^*$ which is not one of $\ell^{\pm}_{\alpha(v)}$. 
\end{enumerate}
In fact we will see (inductively) that there is at most one $v$ for which (2) holds.

If the set of $v$ satisfying (1) or (2) is empty we are done and we set $\mathfrak{T}'_i = \mathfrak{T}_i$.  If not we choose the first such $v$ with respect to the labelling of $\T_u$. Since $\T_u$ is rooted, there is at most one arrow $v' \to v$, and possibly several arrows $v \to v''_j$.
Suppose that $v$ satisfies (1) or (2). Then the factor of $\G_u$ corresponding to $v$ fits into 
\begin{align*}
\int_{\ell^{\pm}_{\alpha(v')}} \frac{d\z_{v'}}{\z_{v'}}\rho(\z_w, \z_{v'}) \psi^0_{\alpha(v')}(\z_{v'})&\int_{\ell} \frac{d\z_v}{\z_v}\rho(\z_{v'}, \z_v)\psi^0_{\alpha}(\z_v)\\
&\prod_{j}\int_{\ell^{\pm}_{\alpha(v''_j)}}\frac{d\z_{v''_j}}{\z_{v''_j}}\rho(\z_v, \z_{v''_j})\psi^0_{\alpha(v''_j)}(\z_{v''_j})
\end{align*}
where $\ell$ is either $\ell^+_{\alpha}$ or a ray distinct from $\ell^-_{\alpha}$, and $w \to v'$. If none of the rays $\ell^{\pm}_{\alpha(v')}$ and $\ell^-_{\alpha(v''_j)}$ separate $\ell$ and $\ell^-_{\alpha}$ we extend $\mathfrak{T}_i$ to $\mathfrak{T}'_{i}$ by a single child $\tilde{u}$ of $u$, with tree $\T_{\tilde{u}}$ isomorphic to $\T_{u}$, and $\G_{\tilde{u}}$ obtained from $\G_{u}$ by replacing $\ell$ in the factor above with $\ell^-_{\alpha(v)}$. Otherwise by Fubini we rewrite the integral above in the form
\begin{align}\label{fubini}
\nonumber  \int_{\ell^{\pm}_{\alpha(v')}} \frac{d\z_{v'}}{\z_{v'}}\rho(\z_w, \z_{v'}) \psi^0_{\alpha(v')}(\z_{v'})&\left(\prod_{j}\int_{\ell^{\pm}_{\alpha(v''_j)}}\frac{d\z_{v''_j}}{\z_{v''_j}}\psi^0_{\alpha(v''_j)}(\z_{v''_j})\right)\\
&\int_{\ell} \frac{d\z_v}{\z_v}\prod_j \rho(\z_v, \z_{v''_j})\rho(\z_{v'}, \z_v)\psi^0_{\alpha(v)}(\z_v)
\end{align}
The function $\frac{1}{\z_v}\prod_j\rho(\z_v, \z_{v''_j})\rho(\z_{v'}, \z_v)\psi^0_{\alpha(v)}(\z_v)$ is holomorphic in $\z_v \in \C^*\setminus\{\z_{v'}, \z_{v''_j}\}$, with simple poles at $\z_{v'}, \z_{v''_j}$ of residues given respectively by $-\frac{1}{2\pi i}\rho(\z_{v'}, \z_{v''_j})\psi^0_{\alpha}(\z_{v'})$ and $\frac{1}{2\pi i}\rho(\z_{v'}, \z_{v''_j})\psi^0_{\alpha(v)}(\z_{v''_j})$. If we apply the residue theorem (justified by Lemma \ref{BesselLemma}) we can rewrite \eqref{fubini} as
\begin{align}\label{push2}
\nonumber  \int_{\ell^{\pm}_{\alpha(v')}} \frac{d\z_{v'}}{\z_{v'}}\rho(\z_w, \z_{v'}) \psi^0_{\alpha(v')}(\z_{v'})&\left(\prod_{j}\int_{\ell^{\pm}_{\alpha(v''_j)}}\frac{d\z_{v''_j}}{\z_{v''_j}}\psi^0_{\alpha(v''_j)}(\z_{v''_j})\right)\\
&\int_{\ell^{-}_{\alpha(v)}} \frac{d\z_v}{\z_v}\prod_j\rho(\z_v, \z_{v''_j})\rho(\z_{v'}, \z_v)\psi^0_{\alpha(v)}(\z_v)
\end{align}
plus residue terms
\begin{equation}\label{res2}
\mp \int_{\ell^-_{\alpha(v')}} \frac{d\z_{v'}}{\z_{v'}}\rho(\z_w, \z_{v'}) (-1)^{\bra \alpha(v), \alpha(v')\ket}\psi^0_{\alpha(v) + \alpha(v')}(\z_{v'})\prod_{j}\int_{\ell^{\pm}_{\alpha(v''_j)}}\frac{d\z_{v''_j}}{\z_{v''_j}}\rho(\z_{v'}, \z_{v''_j})\psi^0_{\alpha(v''_j)}(\z_{v''_j})
\end{equation}
and
\begin{align}\label{res3}
\nonumber \pm \int_{\ell^{\pm}_{\alpha(v')}} \frac{d\z_{v'}}{\z_{v'}}\rho(\z_w, \z_{v'}) \psi^0_{\alpha(v')}(\z_{v'})&\int_{\ell^-_{\alpha(v''_k)}} \frac{d\z_v}{\z_v}\rho(\z_{v'}, \z_v)(-1)^{\bra \alpha(v), \alpha(v''_k)\ket}\psi^0_{\alpha(v) + \alpha(v''_k)}(\z_v)\\
&\prod_{j \neq k}\int_{\ell^{\pm}_{\alpha(v''_j)}}\frac{d\z_{v''_j}}{\z_{v''_j}}\rho(\z_v, \z_{v''_j})\psi^0_{\alpha(v''_j)}(\z_{v''_j})
\end{align}
where \eqref{res2} is only present if $\ell^{\pm}_{\alpha(v')}$ is in fact $\ell^-_{\alpha(v')}$ and morevoer $\ell^-_{\alpha(v')}$ separates $\ell$ and $\ell^-_{\alpha(v)}$, and a term \eqref{res3} appears for each $\ell^-_{\alpha(v''_k)}$ separating $\ell$, $\ell^-_{\alpha(v)}$. The signs in \eqref{res2}, \eqref{res3} are determined according to whether $\ell$ moving to $\ell^{-}_{\alpha(v)}$ crosses $\ell^-_{\alpha(v')}$ (respectively $\ell^-_{\alpha(v''_k)}$) in the clockwise, respectively counterclockwise direction. We extend $\mathfrak{T}_i$ to $\mathfrak{T}'_{i}$ by children  $\tilde{u}, u'$ and $u''_k$ of $u$, where $u'$ is only present if if $\ell^{\pm}_{\alpha(v')} = \ell^-_{\alpha(v')}$ separates $\ell$ and $\ell^-_{\alpha(v)}$, and there is a child $u''_k$ for each $\ell^-_{\alpha(v''_k)}$ separating $\ell$, $\ell^-_{\alpha(v)}$. The tree $\T_{\tilde{u}}$ is just $\T_u$, but with \eqref{push2} attached. To obtain $\T_{u'}$ we remove $v$ from $\T_u$, and connect $v'$ to all $v''_j$. We decorate $v'$ by $\alpha(v) + \alpha(v')$, leave the other decorations unchanged, and use the natural induced labelling. Similarly for $\T_{u''_j}$ we remove $v$ from $\T_u$ and we replace it with $v''_k$ (i.e. we contract the edge $v \to v''_k$), decorated by $\alpha(v) + \alpha(v''_k)$. Again there is a natural induced labelling on $\T_{u''_j}$. By construction our inductive assumptions are all satisfied.\\ 
\begin{rmk} Following our convention, if $\ell^-_{\alpha(v)}$ coincides with $\ell^-_{\alpha(v')}$ or a subset of the $\ell^-_{\alpha(v''_j)}$ (possibly empty), or both, then the integral over $\ell^-_{\alpha(v)}$ in \eqref{push2} is actually a limit of integrals over $\ell \to^{\pm} \ell^-_{\alpha(v)}$.
\end{rmk}\\
\noindent When we go from $\T_u$ to $\T_{\tilde{u}}$ (i.e. in the construction of $\mathfrak{T}'_i$), $\T_{u'}$ or $\T_{u''_j}$ either the number of vertices or the cardinality of the set of vertices satisfying (1) or (2) decreases. If we define $\mathfrak{T}_{i+1}$ as the union of $\mathfrak{T}_i$ with all the $\mathfrak{T}'_i$ along the corresponding leaf, we see that the sequence $\mathfrak{T}_i$ stabilizes to $\mathfrak{T}_{\infty}$ for $i \gg 1$. We let $\mathfrak{T} = \mathfrak{T}_{\infty}$.\\

\noindent\textbf{Estimate for $J_{\T}(\eps, R)$.} According to the construction above there is precisely one leaf $\widehat{u}$ of $\mathfrak{T}$ for which each integration ray $\ell$ appearing in $\G_{\T_{\widehat{u}}}$ coincides with $\ell^{-}_{\gamma_i}$ or $\ell^-_{\eta_j}$. The $\psi^0$ factor appearing in the integral over such a ray is of the form $\psi^0_{\alpha}(\tau_0 - \eps)$ where $\alpha$ is a (positive) multiple of either $\gamma_i$ or $\eta_j$. Since the functions $\psi^0_{\alpha}(\tau)$ are continuous across $\tau_0$,  we have as $\eps \to 0$
\begin{equation*}
\G_{\T_{\widehat{u}}} - \G_{\T}(\tau_0 + \eps)= O(\eps)e_{c(\T)}
\end{equation*}
uniformly for $R \geq 1$.

The other terms in $J_{\T}(\eps, R)$ are in bijection with the leaves $u$ of $\mathfrak{T}$ different from $\widehat{u}$. Let $\pi_u$ denote the unique path from the root of $\mathfrak{T}$ to $u$. Then according to the construction above, $\pi_u$ is of maximal length in $\mathfrak{T}$ if and only if $|\T^{[0]}_u| = 1$, and we have 
\begin{equation*}
\G_{\T_{u}}(\z) = (-1)^{\pi_u} \int_{\ell^-_{c(\T)}} \frac{d\z'}{\z'} \rho(\z, \z') \psi^0_{c(\T)}(\z', \tau_0 - \eps)
\end{equation*}
where $(-1)^{\pi_u}$ is a well defined sign $\pm 1$ attached to $\pi_u$. By Lemma \ref{BesselLemma}, in this case we have, as $R \to +\infty$, uniformly in $\eps > 0$ sufficiently small,  
\begin{equation*}
\G_{\T_{u}}(\z) = f(\z, R)e_{c(\T)} 
\end{equation*}
where along curves that do not become tangent to the boundary of the cone spanned by $Z^{\pm}(\gamma_i), Z^{\pm}(\eta_j)$,
\begin{equation*}
\lim_{\z \to 0} f(\z, R) \sim \frac{1}{\pi |Z^{\tau_0}(c(\T))| R} e^{-\pi R |Z^{\tau_0}(c(\T))|}.
\end{equation*}

If $\pi_u$ is not of maximal length in $\mathfrak{T}$, we claim that for fixed $\eps > 0$, sufficiently small, 
\begin{equation*}
\G_{\T_{u}}(\z) = o_{\eps}(\frac{1}{\pi |Z^{\tau_0}(c(\T))|R} e^{-\pi R |Z^{\tau_0}(c(\T))|})e_{c(\T)}
\end{equation*}
as $R\to +\infty$ (where the $o_{\eps}$ notation is a reminder that the bound is not uniform in $\eps$). Indeed by the construction of $\mathfrak{T}$ in this case we have $|\T^{[0]}_u| > 1$, and
\begin{equation}\label{remainder}
\G_{\T_u}(\z) = \int_{L} \prod_{\{w \to v\}\subset \widetilde{T}_u} \frac{d\z_v}{\z_v}\frac{\z_v + \z_w}{\z_v - \z_w} \psi^0_{\alpha(v)}(\z_v),
\end{equation}
where $L$ is the product of all rays $\ell^-_{\alpha(v)}$, $\widetilde{\T}_u$ denotes the augmented tree $w_0 \to \T_u$ (mapping to the root of $\T_u$), with $\z_{w_0} = \z$. Suppose that in \eqref{remainder} we have $\ell^-_{\alpha(v)} \neq \ell^-_{\alpha(w)}$ if $w \to v$. Iterating Lemma \ref{BesselLemma} sufficiently many times, we have that for $R \gg 1$ (depending on $\eps$) $\G_{\T_u}$ is a multiple of $e_{c(\T)}$ bounded by
\begin{equation*}
C \prod_{v \in \T^{[0]}_{u}} \frac{1}{\pi R |Z^+(\alpha(v))|} e^{-\pi R |Z^+(\alpha(v))|}, 
\end{equation*}
where the constant $C$ may be chosen uniformly for $|\T^{[0]}_u|$ bounded. The claim follows since for fixed $\eps > 0$ we have
\begin{equation*}
\sum_{v \in \T^{[0]}_{u}} |Z^+(\alpha(v))| > |Z^+(c(\T_u))|. 
\end{equation*} 
In general we may well have $\ell^-_{\alpha(v)} = \ell^-_{\alpha(w)}$ for some $w \to v$ in \eqref{remainder}, but there is at least one $w' \to v'$ with $\ell^-_{\alpha(v')} \neq \ell^-_{\alpha(w')}$; this holds since we are assuming that $\bar{c}(\T)$ is primitive. Then $\G_{\T_u}(\z)$ is a multiple of $e_{c(\T)}$ bounded by
\begin{equation*}
C' \frac{1}{\pi R |Z^+(\alpha(w'))|} e^{-\pi R |Z^+(\alpha(w'))|} \frac{1}{\pi R |Z^+(c(\T) - \alpha(w'))|} e^{-\pi R |Z^+(c(\T) - \alpha(w'))|}. 
\end{equation*}
As before the claim follows since for fixed $\eps > 0$ we have
\begin{equation*}
|Z^+(\alpha(w'))| + |Z^+(c(\T) - \alpha(w'))| > |Z^+(c(\T_u))|. 
\end{equation*}

\noindent\textbf{Construction of tropical types $[\Upsilon]$.} Let $\pi_u$ denote a path of maximal length in $\mathfrak{T}$ as before. We can think of this as a sequence of trees $\T_i$ for $i = 0, \cdots, N$, where $\T_0 = \T$ (as decorated, labelled trees), and $\T_{N} = \T_u$ contains a single vertex. Moreover by the construction of $\mathfrak{T}$ there are natural maps 
\begin{equation*}
\varphi_i\!: \T^{[0]}_i \to \T^{[0]}_{i+1},
\end{equation*}
such that $\varphi_i$ is either a bijection, or maps two vertices $v_1, v_2 \in \T^{[0]}_i$ to the same vertex $v \in \T^{[0]}_{i+1}$ (and is a bijection on $\T^{[0]}_i \setminus \{v_1, v_2\}$). The set of vertices $\bigcup_i \T^{[0]}_i$ and maps $\{\varphi_j\}$ can be thought of naturally as a tree $\widetilde{\Upsilon}$, whose internal vertices are either $2$-valent or $3$-valent. We define a new unbounded tree $\Upsilon$ (i.e. a tree with a set of half-open, ``infinite" edges) by replacing all subtrees of $\widetilde{\Upsilon}$ of type $A_{K}$ for $K > 1$ with a copy of $A_1$, and by removing all the $1$-valent vertices. 

By the definition of $\mathfrak{T}$ and $\Upsilon$, an edge $E \in \Upsilon^{[1]}$ corresponds to a set of factors
\begin{equation*}
\{\int_{\ell_i} \frac{d\z_{v_i}}{\z_{v_i}}\rho(\z_{v'_i}, \z_{v_i}) \psi^0_{\alpha}(\z_{v_i})\}_i
\end{equation*}
where $\alpha \in \Gamma$ does not depend on $i$. Fix a vertex $V \in \Upsilon^{[0]}$. Then again by the definition of $\mathfrak{T}$ and $\Upsilon$, we can label the edges incident in $V$ as $E_1, E_2, E_{\rm out}$, and we know that for $k = 1, 2$ there are factors 
\begin{equation*}
\int_{\ell_k} \frac{d\z_{v_k}}{\z_{v_k}}\rho(\z_{v'_k}, \z_{v_k}) \psi^0_{\alpha_k}(\z_{v_k})
\end{equation*}
in the sets corresponding to $E_k$, for which the set corresponding to $E_{\rm out}$ contains the factor
\begin{equation*}
\int_{\ell} \frac{d\z_{v'}}{\z_{v}}\rho(\z_{v'}, \z_{v}) (-1)^{\bra \alpha_1, \alpha_2\ket}\psi^0_{\alpha_1 + \alpha_2}(\z_{v_k})
\end{equation*}
obtained as a residue term from pushing $\ell_2$ to $\ell^-_{\alpha_2}$ across $\ell_1 = \ell^-_{\alpha_1}$. We then let the triple of integral vectors $\{\omega(E_1)m_V(E_1), \omega(E_2)m_V(E_2), \omega(E_{\rm out})m_V(E_{\rm out})\}$ corresponding to $V$ be $\{\pi (-\alpha_1), \pi (-\alpha_2), \pi(\alpha_1 + \alpha_2)\}$. Then the crucial relations
\begin{equation}\label{tropCond1}
\omega(E_1)m_V(E_1) + \omega(E_2)m_V(E_2) + \omega(E_{\rm out})m_V(E_{\rm out} = 0
\end{equation}
and
\begin{equation}\label{tropCond2}
m_V(E_1) \not\parallel m_V(E_2) 
\end{equation}
hold. By \eqref{tropCond1} and \eqref{tropCond2} the pair $(\Upsilon, p_{\Upsilon})$ determines the combinatorial type of a connected tropical curve $[\Upsilon]$ in $\R^2$, which is precisely of the kind occurring in a set $\mathcal{S}(\mathfrak{d}, \deg(\T))$. By construction we can choose the original labelling $\nu$ so that all combinatorial types $[\Upsilon]$ arising from $\T$ are compatible with a given total order of $\deg(\T)$, thought of as a set. One can check by induction on $\deg(\T)$ that for suitable $\nu$, for a generic choice of ends $\mathfrak{d}$, the set $[\mathcal{S}(\mathfrak{d}, \deg(\T))]$ contains all $[\Upsilon]$.\\

\noindent For each leaf $u \in \mathfrak{T}$ of maximal depth, we have constructed a tropical type $[\Upsilon](u)$. We set
\begin{equation*}
(-1)^{[\Upsilon](u)} = (-1)^{\pi_u}.
\end{equation*}
Therefore the leading order term in the expansion for $J_{\T}(\eps, R)$ obtained above can be written as
\begin{equation*}
\sum_{u} (-1)^{[\Upsilon](u)} f(\z, R), 
\end{equation*} 
as claimed in (2) of the statement of the present Lemma.
\end{proof}
\begin{exm} Consider the tree $\T = \{\gamma_1 \to \eta_1 \to \gamma_1 \to 2\eta_1\}$
and fix the unique labelling which is compatible with the orientation. Then the (signed) tropical types obtained from $\G_{\T}$ are pictured in Figure \ref{tropicalPicture1}: they comprise the types of a smooth (left) and a nodal (right) tropical curve.
\begin{figure}[ht]
\centerline{\includegraphics[scale=.6]{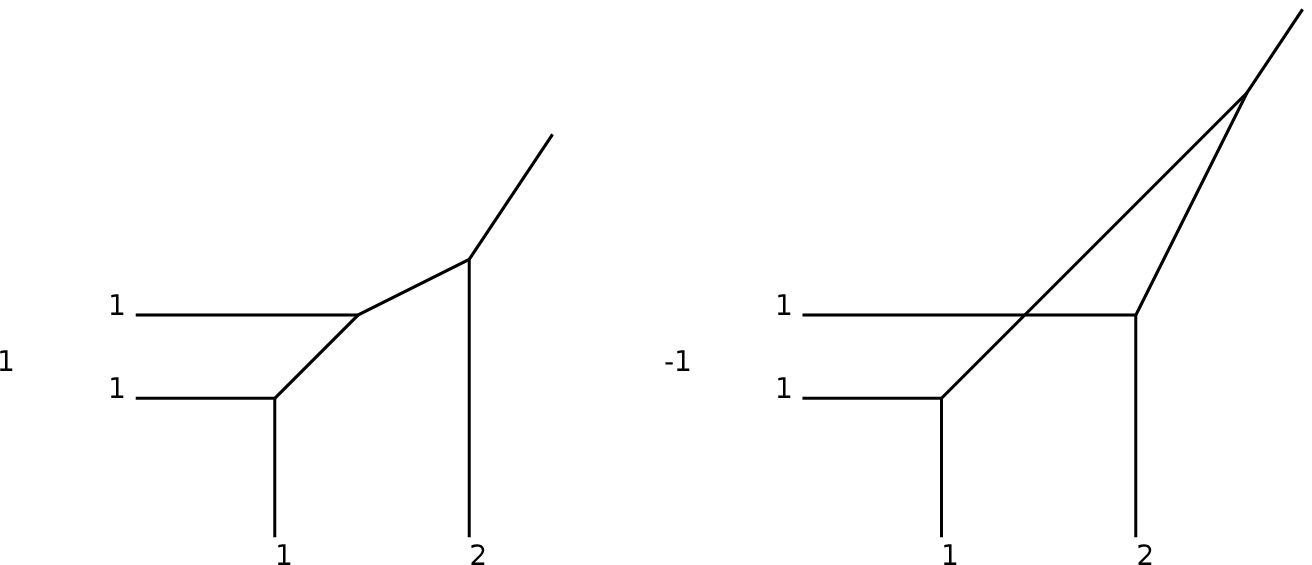}}
\caption{Tropical types for $\G_{\T}$.}
\label{tropicalPicture1}
\end{figure}
\end{exm}
\begin{cor}\label{wallcross} Choose $i \in \{1, \dots, \ell_1\}$. Then we have
\begin{equation*}
\dt_{Z^{-}}(\alpha) = \sum_{c(\T) = \alpha, \gamma_{\T} = p_{\T} \gamma_i} \frac{p_{\T}}{c(\T)_{\gamma_i}}\W_{\T} \sum_{\{[\Upsilon_j]\}_{\T}} (-1)^{[\Upsilon_j]}, 
\end{equation*}
where we let $c(\T)_{\gamma_i}$ denote the component of $c(\T)$ along $\gamma_i$.
\end{cor}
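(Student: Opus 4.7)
The plan is to compare the iterative solutions $\psi^\infty(\z,Z^\tau)$ on either side of $\tau_0$, and to read off $\dt_{Z^-}(\alpha)$ from the coefficient of $e_\alpha\,(s,t)^\alpha$ in the exponent of \eqref{iterativeSol} at leading order as $R\to+\infty$. Continuity of $\psi(\z,Z^\tau)\in C^0([0,1],\Aut_{R_k}(\widehat\g))$ at $\tau_0$ forces the $\Gamma\otimes\widehat\g$-valued sum
\begin{equation*}
S(\z,\tau)=\sum_{\T} (s,t)^{c(\T)}\gamma_{\T}\,\W_{\T}(Z^{\tau})\,\G_{\T}(\z;\tau)
\end{equation*}
to satisfy $S(\z,\tau_0^-)=S(\z,\tau_0^+)$. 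I would split each sum into the trees common to both sides (positive trees, decorated by $\N\gamma_i\cup\N\eta_j$) and the ``$Z^-$-only'' trees carrying at least one vertex decorated by an element of $\operatorname{supp}\dt_{Z^-}\setminus(\N\gamma_i\cup\N\eta_j)$, and then rearrange the continuity identity as
\begin{equation*}
\sum_{Z^-\text{-only }\T}(s,t)^{c(\T)}\gamma_{\T}\,\W_{\T}\,\G_{\T}(\z;\tau_0^+)=\sum_{\T\text{ positive}}(s,t)^{c(\T)}\gamma_{\T}\,\W_{\T}\,J_{\T}(\eps,R)\,e_{c(\T)}.
\end{equation*}

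Next I would compare the coefficient of $e_\alpha(s,t)^{\alpha}$ on both sides, observing that every $\G_\T$ is a scalar multiple of $e_{c(\T)}$, so different $\alpha$'s do not mix. On the left, the singleton tree $\{\bullet\}_\alpha$ contributes $\alpha\,\dt_{Z^-}(\alpha)\,\G_{\{\bullet\}_\alpha}(\z;\tau_0^+)$, whose integrand is the basic integral of Lemma \ref{BesselLemma} and which (taken nontangentially to $\partial\mathcal{C}$) gives the same $f(\z,R)\,e_\alpha$ asymptotic as in Lemma \ref{LemmaA}. Every other $Z^-$-only tree with $c(\T)=\alpha$ has at least two vertices; iterating Lemma \ref{BesselLemma} together with the strict triangle inequality $\sum_v|Z^{\tau_0+\eps}(\alpha(v))|>|Z^{\tau_0+\eps}(\alpha)|$, valid off the wall for any fixed $\eps>0$, shows that these contributions are $o_{\eps}(f(\z,R))$ as $R\to+\infty$. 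On the right-hand side Lemma \ref{LemmaA} directly supplies
\begin{equation*}
J_{\T}(\eps,R)=\sum_{\{[\Upsilon_j]\}_{\T}}(-1)^{[\Upsilon_j]}f(\z,R)+o_{\eps}\!\left(\tfrac{1}{\pi R|Z^{\tau_0}(c(\T))|}e^{-\pi R|Z^{\tau_0}(c(\T))|}\right)+O(\eps).
\end{equation*}

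Dividing by the common factor $f(\z,R)$, taking $R\to+\infty$ and then $\eps\to 0$, yields the vectorial identity in $\Gamma\otimes\C$
\begin{equation*}
\alpha\,\dt_{Z^-}(\alpha)=\sum_{\T\text{ positive},\,c(\T)=\alpha}\gamma_{\T}\,\W_{\T}\sum_{\{[\Upsilon_j]\}_{\T}}(-1)^{[\Upsilon_j]}.
\end{equation*}
Pairing with the functional on $\Gamma$ extracting the $\gamma_i$-coordinate gives $c(\T)_{\gamma_i}\,\dt_{Z^-}(\alpha)$ on the left, while on the right only the summands with $\gamma_\T=p_\T\gamma_i$ survive (the other generators $\gamma_{i'}$, $\eta_j$ being linearly independent of $\gamma_i$), each contributing $p_\T\,\W_\T\sum(-1)^{[\Upsilon_j]}$. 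Dividing by $c(\T)_{\gamma_i}$ produces the claimed formula.

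The main technical point will be the uniform subleading estimate on the multi-vertex $Z^-$-only contributions in the second step. Those weights $\W_\T$ depend on $\dt_{Z^-}(\alpha(v))$ at classes strictly smaller than $\alpha$, so the argument is implicitly inductive on a linear functional on $\Gamma^+$ separating the two chambers; moreover the strict triangle inequality degenerates as $\eps\to 0$, so one must be careful to let $R\to\infty$ first and only then $\eps\to 0$, and verify that the $o_\eps$ bounds survive this order of limits uniformly in the inductive hypothesis.
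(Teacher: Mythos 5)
Your proposal is correct and follows essentially the same route as the paper's (very terse) proof: continuity of $\psi^{\infty}(\z,Z^\tau)$ across $\tau_0$, the explicit form \eqref{iterativeSol} applied to $\Omega_{Z^-}$, and the leading asymptotic from Lemma~\ref{LemmaA}~(2). You have simply spelled out the comparison of coefficients of $(s,t)^{\alpha}e_{\alpha}$, the dominance of the singleton tree on the $Z^-$ side, the subleading estimate for multi-vertex $Z^-$-only trees, and the final pairing with the $\gamma_i$-coordinate — all of which the paper leaves implicit.
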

\begin{proof} Choose $k$ such that $(s, t)^{\alpha} \neq 0$ in $R_k$. By the expansion (2) in the statement of Lemma \ref{LemmaA}, and the form of the iterative solution $\psi^{\infty}(\z)$ of \eqref{TBA} given in \eqref{iterativeSol}, applied to the spectrum $\Omega_{Z^-}$, we see that the equality claimed above is a necessary condition for the glueing of the $\psi^{\infty}(\z)$ relative to $\Omega_{Z^+}$ and $\Omega_{Z^-}$ to be continuous. 
\end{proof}
\begin{thm}\label{TheoremB}  The sum over instanton contributions for trees $\T$ labelled by two charges $\gamma, \eta$, of fixed tropical degree and root label, encodes a tropical count, 
\begin{equation*}
\sum_{\deg{\T} = {\bf w}, \gamma_{\T} = w_{11}\gamma} \W_{\T} \left(\sum_{\{[\Upsilon_j]\}_{\T}} (-1)^{[\Upsilon_j]}\right) = \prod_{i, j}\frac{1}{w^2_{i j}}\frac{N^{\rm trop}({\bf w})}{\Aut({\bf w}')}, 
\end{equation*}
(where ${\bf w}'$ is obtained by forgetting $w_{11}$).
\end{thm}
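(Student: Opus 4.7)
The plan is to unpack the weights $\W_\T$, factor out the prefactor $\prod_{ij} 1/w_{ij}^2$, and match the resulting combinatorial sum with the tropical count, following the scattering diagram technique of \cite{gps} Theorem 2.8.

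First, since $\Omega_{Z^+}(\pm\gamma) = \Omega_{Z^+}(\pm\eta) = 1$ and positive trees are decorated by elements of $\N\gamma \cup \N\eta$, the DT invariants reduce to $\dt(w\gamma) = \dt(w\eta) = 1/w^2$. Consequently
\[
\W_\T \;=\; (-1)^{|\T^{[1]}|}\,\frac{1}{|\Aut(\T)|}\,\prod_{v \in \T^{[0]}}\frac{1}{w(v)^2}\,\prod_{v \to w}\bra\alpha(v),\alpha(w)\ket,
\]
where $w(v)$ denotes the positive integer weight at $v$. For any $\T$ of tropical degree ${\bf w}$ one has $\prod_{v} 1/w(v)^2 = \prod_{ij}1/w_{ij}^2$, and this factor is common to every summand on the LHS of Theorem B and can be pulled out.

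Next, I match the residual sum
\[
\sum_{\deg\T={\bf w},\,\gamma_\T=w_{11}\gamma}\frac{(-1)^{|\T^{[1]}|}}{|\Aut(\T)|}\,\prod_{v\to w}\bra\alpha(v),\alpha(w)\ket\; \sum_{\{[\Upsilon_j]\}_\T}(-1)^{[\Upsilon_j]}
\]
with $N^{\rm trop}({\bf w})/|\Aut({\bf w}')|$. Given $\Upsilon\in\mathcal{S}(\mathfrak{d},{\bf w})$, I enumerate the triples $(\T,\nu,[\Upsilon_j]=[\Upsilon])$ arising from Lemma \ref{LemmaA}, parametrised by orderings of the trivalent vertices of $\Upsilon$ compatible with the distinguished end of weight $w_{11}$. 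At each trivalent vertex $V$ with incoming decorations $\alpha_1, \alpha_2 \in \Gamma$, the factor $|\bra\alpha_1,\alpha_2\ket|$ is precisely the tropical vertex multiplicity $\mu_V$ (a determinant of primitive integral vectors). A sign analysis shows that $(-1)^{|\T^{[1]}|}$, $(-1)^{[\Upsilon_j]}$ and the signs of the pairings $\bra\alpha_1,\alpha_2\ket$ combine to $+1$ on every admissible oriented realisation of $\Upsilon$, so that each $\Upsilon$ contributes $\mu(\Upsilon)=\prod_V\mu_V$ to the sum. The automorphism factors $|\Aut(\T)|^{-1}$ collapse upon summation to $|\Aut({\bf w}')|^{-1}$, reflecting the permutation symmetries of ${\bf w}$ away from the distinguished root of weight $w_{11}$. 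Summing over $\Upsilon\in\mathcal{S}(\mathfrak{d},{\bf w})$ then gives $N^{\rm trop}({\bf w})/|\Aut({\bf w}')|$, which is the required identity.

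The main obstacle is the combinatorial bijection just described together with the sign cancellation: one must establish an explicit correspondence between triples (positive rooted tree, labelling, tropical type) and oriented tropical curves in $\mathcal{S}(\mathfrak{d},{\bf w})$ with sink at the end of weight $w_{11}$, and verify that the product of the three sources of signs is $+1$ so that each tropical vertex contributes exactly $\mu_V$. This transports the scattering-diagram / flow-tree argument of \cite{gps} Theorem 2.8 to the setting of the iterated integrals $\G_\T$ and the tropical types constructed in Lemma \ref{LemmaA}.
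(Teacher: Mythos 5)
Your proposal is a genuinely different route from the paper's, but the step that would carry the proof is asserted rather than established, and that step is essentially the whole theorem.

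Your plan is to expand $\W_{\T}$, pull out $\prod_{ij}w_{ij}^{-2}$ (this part is fine, since all vertices carry $\dt(w\gamma)=\dt(w\eta)=1/w^2$), and then match, tree by tree and labelling by labelling, the residual signed sum with $N^{\rm trop}({\bf w})/|\Aut({\bf w}')|$ via a bijection between triples $(\T,\nu,[\Upsilon_j])$ and oriented realisations of tropical curves in $\mathcal{S}(\mathfrak{d},{\bf w})$. The two load-bearing claims are (i) that such a bijection exists, parametrised by orderings of the trivalent vertices compatible with the distinguished end, and (ii) that the three sources of sign --- $(-1)^{|\T^{[1]}|}$, $(-1)^{[\Upsilon_j]}$, and $\operatorname{sgn}\prod_{v\to w}\bra\alpha(v),\alpha(w)\ket$ --- multiply to $+1$ on every admissible term, so that each vertex contributes exactly the tropical multiplicity $\mu_V=|\bra\alpha_1,\alpha_2\ket|$ and each curve contributes $\mu(\Upsilon)$. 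You flag these yourself as ``the main obstacle,'' and indeed neither is proved; but they are not a deferred technicality --- they are the content of the statement. A direct sign analysis is delicate here precisely because a single tree $\T$ can produce several tropical types (or none), the labelling $\nu$ must be chosen ``suitably'' for the $[\Upsilon_j]$ to land in a single $[\mathcal{S}(\mathfrak{d},\deg\T)]$, and distinct $(\T,\nu)$ can produce the same $[\Upsilon]$ with a priori opposite $(-1)^{\pi_u}$; the automorphism bookkeeping $|\Aut(\T)|^{-1}\rightsquigarrow|\Aut({\bf w}')|^{-1}$ also needs the multiplicity of the map $(\T,\nu)\mapsto\Upsilon$, not just a count of fibres.

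The paper's proof avoids all of this. It first identifies the left-hand side (for a fixed root label) with a coefficient of a Donaldson--Thomas invariant $\dt_{Z^-}$ --- this is Corollary~\ref{wallcross}, which is a direct consequence of Lemma~\ref{LemmaA}(2) and the structure of the iterative solution \eqref{iterativeSol} --- and then computes that DT coefficient tropically. To do the latter it deforms the spectrum to $\Omega_{Z^+}(\pm\gamma_i)=\Omega_{Z^+}(\pm\eta_j)=\epsilon$ with $\ell_1=l_1$, $\ell_2=l_2$ distinct lattice elements, applies the nilpotent change of variables $s_i=\sum_r u_{ir}$, $t_j=\sum_r v_{jr}$ (the ``orbifold'' trick of \cite{gps}), specialises $e_{\gamma_i}=y$, $e_{\eta_j}=x$, and reads off the coefficient of $\epsilon^{l_1+l_2}$ from the Stokes-factor identity $S^+_\ell=S^-_\ell$ using the argument of \cite{gps} Theorem~2.4. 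The bijection and sign cancellation you want to establish are thereby obtained for free as a consequence of the already-proved scattering-to-tropical correspondence in \cite{gps}, rather than being rederived. Finally the forgetful map from the auxiliary lattice (with distinct $\gamma_i,\eta_j$) to the genuine two-charge lattice produces the $|\Aut({\bf w}')|^{-1}$.

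So: your proposal is correct in outline and would, if the bijection and sign cancellation were carried out, give a more elementary and self-contained proof than the paper's; but as written it only reformulates the identity in combinatorial terms without proving the key cancellation. If you want to complete it along these lines, the honest way is to reduce to the $\ell_1=l_1$, $\ell_2=l_2$ situation with nilpotent deformation parameters as the paper does, since there every tropical curve has multiplicity $1$ and the sign bookkeeping trivialises, and then descend via the forgetful map.
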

\begin{proof} Let ${\bf w}$ be a weight vector. Write $l_i$ for the length of ${\bf w}_i$. We make the special choice of parameters $\ell_1 = l_1$, $\ell_2 = l_2$, fix a corresponding base ring $R_k$ and consider the problem \eqref{TBA} over $R_k$ for $\tau > \tau_0$ with a deformed BPS spectrum $\Omega_{Z^+}(\pm\gamma_i) = \Omega_{Z^+}(\pm\eta_j) = \epsilon$ (while the other invariants vanish),
\begin{align*}  
\nonumber\psi_{\alpha}(\z) = \psi^0_{\alpha}(\z)\exp&\left(\sum^{l_1}_{i = 1}\bra \epsilon(\pm \gamma_i),\alpha \ket \int_{\ell_{\pm\gamma_i}}\frac{d\z'}{\z'}\rho(\z,\z')\log(1 - s_i\psi_{\pm \gamma_i}(\z'))\right.\\
&\left. + \sum^{l_2}_{j = 1}\bra \epsilon(\pm \eta_j),\alpha \ket \int_{\ell_{\pm\eta_j}}\frac{d\z'}{\z'}\rho(\z,\z')\log(1 - t_j\psi_{\pm \eta_j}(\z')) \right),
\end{align*} 
The invariant 
\begin{equation*}
\dt_{Z^-}(\sum^{l_1}_{i = 1} |w_{1i}| \gamma_i + \sum^{l_2}_{j = 1} |w_{2j}| \eta_j)
\end{equation*}
is now a polynomial in $\epsilon$. Following the argument leading to the expansion \eqref{iterativeSol} and to Corollary \ref{wallcross}, the coefficient $\dt_{Z^-}(\sum^{l_1}_{i = 1} |w_{1i}| \gamma_i + \sum^{l_2}_{j = 1} |w_{2j}| \eta_j)[\epsilon^{l_1 + l_2}]$ can only receive contributions from diagrams $\T$ at $Z^+$ with $l_1 + l_2$ vertices, and such that the corresponding monomial in the variables $s_i, t_j$ is 
\begin{equation*}
\prod^{l_1}_{i = 1} s^{w_{1 i}}_i \prod^{l_2}_{j = 1} t^{w_{2 j}}_j.
\end{equation*}
This is the set $\mathcal{P}$ of diagrams for which $\T^{[0]}$ has cardinality $l_1 + l_2$ and is decorated by 
\begin{equation*}
\{w_{11}\gamma_1, \dots, w_{1l_1}\gamma_{l_1}, w_{21}\eta_1, \dots, w_{2l_2}\eta_{l_2}\}.
\end{equation*} 
Then
\begin{equation*}
\dt_{Z^-}(\sum^{l_1}_{i = 1} |w_{1i}| \gamma_i + \sum^{l_2}_{j = 1} |w_{2j}| \eta_j)[\epsilon^{l_1 + l_2}] = \sum_{\T\in \mathcal{P}, \gamma_{\T} = w_{11}\gamma_1} \W_{\T}\left(\sum_{\{[\Upsilon_j]\}_{\T}} (-1)^{[\Upsilon_j]}\right).
\end{equation*}
We make the change of variables
\begin{align}\label{deformation}
s_i = \sum^k_{r = 1} u_{i r},\quad t_j = \sum^k_{r = 1} v_{j r}, 
\end{align}
where the $u_{i j}, v_{i j}$ satisfy $u^2_{i j} = v^2_{i j} = 0$. The corresponding equation is
\begin{align}\label{TBAdeformed} 
\nonumber\psi_{\alpha}(\z) = \psi^0_{\alpha}(\z)\exp&\left(\sum^{l_1}_{i = 1}\bra \epsilon(\pm\gamma_i),\alpha \ket \int_{\ell_{\pm\gamma_i}}\frac{d\z'}{\z'}\rho(\z,\z')\log(1 - (\sum^k_{r = 1} u_{i r})\psi_{\pm\gamma_i}(\z'))\right.\\
&\left. + \sum^{l_2}_{j = 1}\bra \epsilon(\pm\eta_j),\alpha \ket \int_{\ell_{\pm\eta_j}}\frac{d\z'}{\z'}\rho(\z,\z')\log(1 - (\sum^k_{r = 1} v_{j r})\psi_{\pm\eta_j}(\z')) \right),
\end{align}
where now $\psi_{\alpha}(\z)$ takes values in $\g \otimes \widetilde{R}_k$, where 
\begin{equation*}
\widetilde{R}_k = \frac{\C[u_{i r}, v_{j r}]}{(u^2_{i r}, v^2_{j r})},
\end{equation*}
for $i=1,\dots,l_1, \,\, j=1,\dots,l_2, \,\, r =1,\dots ,k$. Notice that \eqref{deformation} induces an embedding $R_{k} \hookrightarrow \widetilde{R}_k$, so that a solution to \eqref{TBAdeformed} also gives a solution for the equation over $R_k$. Using $u^2_{ij } = 0$ we see that
\begin{align*}
\epsilon\log(1 - (\sum^k_{r = 1} v_{i r})\psi_{\pm\gamma_i}) & = \sum_{p \geq 1} \sum_{|J| = p} p! \prod_{r\in J} \epsilon u_{i r} \frac{\psi_{\pm p\gamma_i}}{p} = \sum_{p \geq 1} \sum_{|J| = p} \log(1 - (p - 1)! \prod_{r\in J} \epsilon u_{i r} \psi_{\pm p\gamma_i}),\\
\epsilon\log(1 - (\sum^k_{r = 1} u_{j r})\psi_{\pm\eta_j}) &= \sum_{p \geq 1} \sum_{|J| = p} p! \prod_{r\in J} \epsilon v_{j r} \frac{\psi_{\pm p\eta_j}}{p} = \sum_{p \geq 1} \sum_{|J| = p} \log(1 - (p - 1)! \prod_{r\in J} \epsilon v_{j r} \psi_{\pm p\eta_j}).
\end{align*}
Then \eqref{TBAdeformed} becomes
\begin{align}\label{TBAdeformed2} 
\nonumber\psi_{\alpha}(\z) = \psi^0_{\alpha}(\z)\exp&\left(\sum^{l_1}_{i = 1} \sum_{p \geq 1} \sum_{|J| = p} \bra (\pm \gamma_i),\alpha \ket \int_{\ell_{\pm \gamma_i}}\frac{d\z'}{\z'}\rho(\z,\z') \log(1 - (p - 1)! \prod_{r\in J} \epsilon u_{i r} \psi_{\pm p\gamma_i}(\z'))\right.\\
&\left. + \sum^{l_2}_{j = 1}\sum_{p \geq 1} \sum_{|J| = p}\bra (\pm\eta_j),\alpha \ket \int_{\ell_{\pm\eta_j}}\frac{d\z'}{\z'}\rho(\z,\z') \log(1 - (p - 1)! \prod_{r\in J} \epsilon v_{j r} \psi_{\pm p\eta_j}(\z'))\right).
\end{align}

We will compute $\dt_{Z^-}(\sum^{l_1}_{i = 1} |w_{1i}| \gamma_i + \sum^{l_2}_{j = 1} |w_{2j}| \eta_j)[\epsilon^{l_1 + l_2}]$ in a different way, using the theory developed in \cite{gps} sections 1 and 2. To see this we consider the two different Riemann-Hilbert problems solved by the iterative solutions $\psi^{\infty}(\z, \tau_0 \pm \eps)$ of \eqref{TBA}, again with the choice $\Omega_{Z^+}(\pm\gamma_i) = \Omega_{Z^+}(\pm\eta_j) = \epsilon$. For $\psi^{\infty}(\z, \tau_0 - \eps)$ we have rays and Stokes factors
\begin{equation}\label{scatter1}
\{(\ell_{\pm \gamma_i}(\tau_0 + \eps), \Ad \exp(\epsilon\operatorname{Li}_2(s_i e_{\pm \gamma_i})), (\ell_{\pm \eta_j}(\tau_0 + \eps), \Ad \exp(\epsilon\operatorname{Li}_2(t_j e_{\pm \eta_j}))\},
\end{equation} 
while for $\psi^{\infty}(\z, \tau_0 + \eps)$ they are given by
\begin{equation}\label{scatter2}
\{(\ell_{\alpha}(\tau_0 - \eps), \prod_{Z(\alpha) \in \ell}\Ad \exp(\Omega_{Z^-}(\alpha; \epsilon) \operatorname{Li}_2(s, t)^{\alpha} e_{\alpha})))\}.
\end{equation}
where $\alpha = \sum^{\ell_1}_{i = 1} a_i \gamma_i + \sum^{\ell_2}_{j = 1} b_j \eta_j$ with $|a_i|, |b_j| \leq k$. The limit $\psi^{\infty}(\z, \tau_0 + 0)$ extends to a holomorphic function on $\C\setminus \R Z^0(\Gamma)$, the complement in $\C^*$ of two rays $\pm \ell$ given by $\R_{\mp} Z^0(\Gamma)$. Furthermore $\psi^{\infty}(\z, \tau_0 + 0)$ solves a Riemann-Hilbert factorisation problem for $\pm \ell$, the Stokes factor of $\ell$ being
\begin{equation*}
S^+_{\ell} = \prod_{j} \Ad \exp(\epsilon \operatorname{Li}_2(t_j e_{\eta_j})) \prod_{i} \Ad \exp(\epsilon \operatorname{Li}_2(s_i e_{\gamma_i})).
\end{equation*}
Similarly the limit $\psi^{\infty}(\z, \tau_0 - 0)$ solves a factorisation problem with factor along $\ell$ given by
\begin{equation*}
S^-_{\ell} = \prod^{\to} \Ad \exp(\Omega_{Z^-}(\alpha; \epsilon) \operatorname{Li}_2 (s, t)^{\alpha} e_{\alpha})
\end{equation*}
where $\alpha = \sum^{\ell_1}_{i = 1} a_i \gamma_i + \sum^{\ell_2}_{j = 1} b_j \eta_j$ with $0 \leq a_i, b_j \leq k$ and we are writing the operators from left to right in the clockwise order of $Z^+(\alpha)$ (this is straightforward since only a finite number of rays appear). Since we know that $\psi^{\infty}(\z, \tau_0 + 0) = \psi^{\infty}(\z, \tau_0 - 0)$ in $\C\setminus \R Z^0(\Gamma)$ it follows that 
\begin{equation}\label{boundaryCond}
S^+_{\ell} = S^-_{\ell}.
\end{equation}
In fact all the invariants $\Omega_{Z^-}(\alpha; \epsilon)$ are uniquely determined by this equality. By the above discussion, we know that $\psi^{\infty}(\z, \tau_0 + \eps)$ is induced by a solution of \eqref{TBAdeformed2}. The corresponding limit Riemann-Hilbert problem for $\psi^{\infty}(\z, \tau_0 + \eps)$ has a Stokes factor along $\ell$ given by
\begin{equation*}
S^+_{\ell} = \prod_j \prod_{p \geq 1} \prod_{|J| = p} \exp(\operatorname{Li}_2((p - 1)! \prod_{r\in J} \epsilon v_{j r} e_{p\eta_j})) \prod_i \prod_{p \geq 1} \prod_{|J| = p} \exp(\operatorname{Li}_2((p - 1)! \prod_{r\in J} \epsilon u_{i r} e_{p\gamma_i}))
\end{equation*}
Now we make the specialization $e_{\gamma_i} = y$, $e_{\eta_j} = x$, and pass to the terminology for operators introduced in \cite{gps} section 0.1. Then we can write
\begin{equation*}
S^+_{\ell} = \theta_{(1, 0), f_1} \circ \theta_{(0, 1), f_2} 
\end{equation*}
where
\begin{equation*}
f_1 = \prod_j \prod_{p \geq 1} \prod_{|J| = p} (1 - (p - 1)! \prod_{r\in J} \epsilon v_{j r} x^p),\quad f_2 = \prod_i \prod_{p \geq 1} \prod_{|J| = p} (1 - (p - 1)! \prod_{r\in J} \epsilon u_{i r} y^p). 
\end{equation*}
So the (specialization of) \eqref{boundaryCond} becomes
\begin{equation*}
\theta_{(1, 0), f_1} \circ \theta_{(0, 1), f_2} = \prod^{\to}_{(a, b)} \theta_{(a, b), f_{(a, b)}}, 
\end{equation*}
and $\dt_{Z^-}(\sum^{l_1}_{i = 1} |w_{1i}| \gamma_i + \sum^{l_2}_{j = 1} |w_{2j}| \eta_j)[\epsilon^{l_1 + l_2}]$ appears as the coefficient of 
\begin{equation*}
\epsilon^{l_1 + l_2} \left(\prod_{i, j}\sum_{|J| = w_{1i}}\sum_{|J'| = w_{2 j}} \prod_{r\in J} |J|! u_{i r} \prod_{r'\in J'} |J'|! v_{j r'} \right) x^a y^b
\end{equation*}
in the function $\log f_{(a, b)}$, where $a = |{\bf w}_2|, b = |{\bf w}_1|$ (this is a standard computation, see e.g. \cite{ks} section 2.5, where $\dt(\alpha)$ is denoted by $-a(\alpha)$). We can calculate this coefficient in terms of tropical geometry using the same argument as in the proof of \cite{gps} Theorem 2.4, but keeping track of the parameter $\epsilon$. Following that procedure one introduces an auxiliary weight vector ${\bf W}$, with $l_1 + l_2$ components, given by
\begin{equation*}
{\bf W} = ((w_{11}), \dots, (w_{1 l_1}), (w_{21}), \dots, (w_{2 l_2})).
\end{equation*}
Although we only defined the tropical invariants $N^{\rm trop}({\bf w})$ for two-components weight vectors ${\bf w}$, as explained in \cite{gps} section 2.3, there is an obvious extension to an arbitrary number of components (with corresponding directions for the infinite ends). The directions attached to the first $l_1$ parts of ${\bf W}$ is $(0, 1)$, respectively $(1, 0)$ for the last $l_2$. Then the argument in loc.\! cit.\! gives
\begin{equation*}
\dt_{Z^-}(\sum^{l_1}_{i = 1} |w_{1i}| \gamma_i + \sum^{l_2}_{j = 1} |w_{2j}| \eta_j)[\epsilon^{l_1 + l_2}] = \prod_{i, j}\frac{1}{w^2_{i j}} N^{\rm trop}({\bf W}), 
\end{equation*}
(using $|\Aut({\bf W})| = 1$). On the other hand, since ${\bf W}$ is just a subdivision of ${\bf w}$, we have 
\begin{equation*}
N^{\rm trop}({\bf W}) = N^{\rm trop}({\bf w}),
\end{equation*}
and therefore
\begin{equation}\label{lifted}
\sum_{\T\in \mathcal{P}, \gamma_{\T} = w_{11}\gamma_1}\W_{\T}\left(\sum_{\{[\Upsilon_j]\}_{\T}} (-1)^{[\Upsilon_j]}\right) = \prod_{i, j}\frac{1}{w^2_{i j}} N^{\rm trop}({\bf w}).
\end{equation}
Consider now the problem with $\ell_1 = \ell_2 = 1$. Then there is a class of instanton corrections labelled by the set $\widetilde{\mathcal{P}}$ of trees $\widetilde{\T}$ for which $\widetilde{\T}^{[0]}$ has cardinality $l_1 + l_2$ and is decorated by $\{w_{11}\gamma, \dots, w_{1l_1}\gamma, w_{21}\eta, \dots, w_{2l_2}\eta\}$, and such that moreover $\gamma_{\widetilde{\T}} = w_{11}\gamma$. The obvious forgetful map $\mathcal{P}\cap\{\gamma_{\T} = w_{11} \gamma_1\} \to \widetilde{\mathcal{P}}$ is onto, and $\W_{\T}\left(\sum_{\{[\Upsilon_j]\}_{\T}} (-1)^{[\Upsilon_j]}\right)$ is constant along the fibres. The fibre over $\widetilde{\T}$ has cardinality $\Aut({\bf w}'){\Aut(\widetilde{\T})}^{-1}$, where ${\bf w}'$ is obtained by dropping $w_{11}$. Moreover $\W_{\widetilde{\T}} = \W_{\T}\Aut(\widetilde{\T})$, so we can rewrite \eqref{lifted} as
\begin{equation*}
\sum_{\widetilde{\T} \in \widetilde{\mathcal{P}}}\W_{\widetilde{\T}}\left(\sum_{\{[\Upsilon_j]\}_{\widetilde{\T}}} (-1)^{[\Upsilon_j]}\right) = \prod_{i, j}\frac{1}{w^2_{i j}}\frac{N^{\rm trop}({\bf w})}{\Aut({\bf w}')}.
\end{equation*} 
 
\end{proof}
\begin{exm} We illustrate this result when ${\bf w} = (1 + 1, 1 + 2)$. Consider the GMN diagrams and their contributions,  
\begin{center} 
\centerline{
\xymatrix{\T_1 = \{\gamma \ar[r] & \ar[r] \eta \ar[r] & \gamma \ar[r] & 2\eta\}\, \sim 0, \quad \T_2 = \{\gamma & \ar[l] \eta & \ar[l] \gamma \ar[r] & 2\eta\}\, \sim 0,\\}
}
\end{center}
\begin{center} 
\centerline{
\xymatrix{\T_3 = \{\gamma \ar[r] & \ar[r] 2\eta \ar[r] & \gamma \ar[r] & \eta\}\, \sim 1,\quad \T_4 = \{ \gamma & \ar[l] 2\eta & \ar[l] \gamma \ar[r] & \eta\}\, \sim 1.}
}
\end{center}
We have $|\Aut({\bf w}')| = 1$ and $\prod w^2_{i j} = 4$ so, by Theorem \ref{TheoremB}, $N^{\rm trop}(1+1, 1+2) = 8$. On the other hand we can compute $N^{\rm trop}(1+1, 1+2) = 8$ by choosing (weighted) ends $\mathfrak{d}_{ij}$ and curves as shown in Figure \ref{Ntrop23}. 
\begin{figure}[ht]
\centerline{\includegraphics[scale=.5]{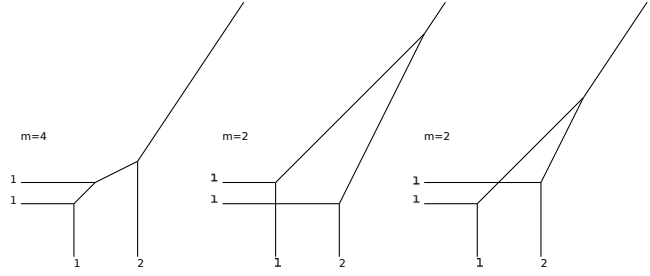}}
\caption{$N^{\rm trop}(1+1, 1+2) = 8$}
\label{Ntrop23}
\end{figure}
Let $[\Upsilon_1], [\Upsilon_2]$ be the two distinct tropical types appearing in the figure: we write $[\Upsilon_1]$ for the smooth type and $[\Upsilon_2]$ for the nodal one. Then one checks that for a unique choice of $\nu_i$, we have $\prod w^2_{i j} \W_{\T_1}\T_1 \sim \prod w^2_{i j} \W_{\T_2}\T_2 \sim -2[\Upsilon_1] + 2[\Upsilon_2]$, $\prod w^2_{i j} \W_{\T_3}\T_3 \sim \prod w^2_{i j} \W_{\T_4}\T_4 \sim 4 [\Upsilon_1]$, so on the GMN side the total cycle of tropical types is $4[\Upsilon_1] + 4[\Upsilon_2]$. This is the same as the set $[\mathcal{S}(\mathfrak{d}, {\bf w})]$ for the choice in Figure \ref{Ntrop23}.  
\end{exm}

\begin{rmk} Theorem \ref{TheoremB} covers the case of diagrams labelled by $\gamma, \eta$ with $\bra \gamma, \eta\ket = 1$. However this is not really a loss of generality with respect to the case of diagrams labelled by $\gamma, \eta$ with arbitrary $\bra \gamma, \eta\ket = \kappa > 0$. In the latter case one still has a weight $\mathcal{W}_{\T, \kappa}$, with the simple relation $\W_{\T, \kappa} = \kappa^{|\T^{[1]}|}  \W_{\T}$. Moreover following the proof of Lemma \ref{LemmaA} and using the definition of the tropical multiplicity $\mu$ we see that for each $[\Upsilon_j] \in \{[\Upsilon_j]\}_{\T}$ we have 
\begin{equation*}
((-1)^{[\Upsilon_j]})_{\kappa} = (-1)^{(\kappa - 1)\mu([\Upsilon_j])}(-1)^{[\Upsilon_j]}.
\end{equation*}
Therefore the analogue of Theorem \ref{TheoremB} is 
\begin{equation*}
\sum_{\deg{\T} = {\bf w}, \gamma_{\T} = w_{11}\gamma} \W_{\T, \kappa} \left(\sum_{\{[\Upsilon_j]\}_{\T}} (-1)^{(\kappa - 1)\mu([\Upsilon_j])}((-1)^{[\Upsilon_j]})_{\kappa}\right) = \kappa^{-|\T^{[1]}|}\prod_{i, j}\frac{1}{w^2_{i j}}\frac{N^{\rm trop}({\bf w})}{\Aut({\bf w}')}. 
\end{equation*}
\end{rmk}
\section{$q$-deformation}\label{qSection}
Kontsevich and Soibelman \cite{ks} deform $\g$ to an \emph{associative, noncommutative} algebra $\g_q$ over $\C[q^{\pm \frac{1}{2}}]$, generated by $\hat{e}_{\gamma}, \gamma \in \Gamma$. The classical product \eqref{classProduct} is quantized to
\begin{equation}\label{qProduct}
\hat{e}_{\alpha} \hat{e}_{\beta} := q^{\frac{1}{2}\bra\alpha, \beta\ket}\hat{e}_{\alpha + \beta}.
\end{equation}
It follows that the classical limit is $q^{\frac{1}{2}} \to -1$. In the quantization the Lie bracket is the natural one given by the commutator. In other words we are now thinking of the $\hat{e}_{\gamma}$ as \emph{operators} (as opposed to the classical bracket \eqref{classBracket}, which corresponds to a Poisson bracket of the $e_{\gamma}$ seen as \emph{functions}). Namely, we set
\begin{equation}\label{qBracket}
[\hat{e}_{\alpha}, \hat{e}_{\beta}] := (q^{\frac{1}{2}\bra\alpha, \beta\ket} - q^{-\frac{1}{2}\bra\alpha, \beta\ket})\hat{e}_{\alpha + \beta}. 
\end{equation}
Since this is the commutator bracket of an associative algebra, $\g_q$ is automatically Poisson.
 
\noindent\textbf{Remark.} After rescaling, the Lie bracket \eqref{qBracket} has the classical limit \eqref{classBracket}:
\begin{equation*}
\lim_{q^{\frac{1}{2}}\to -1} \frac{1}{q-1} [\hat{e}_{\alpha}, \hat{e}_{\beta}] = (-1)^{\bra \alpha, \beta\ket}\bra \alpha, \beta\ket \hat{e}_{\alpha + \beta}.
\end{equation*}

\noindent Fix an element $\sigma$ of the maximal ideal of $\widehat{\g}$. The natural $q$-deformation of $\exp(\operatorname{Li}_2(\sigma e_{\alpha}))$ is given by the $q$-dilogarithm,
\begin{equation}
{\bf E}(\sigma\hat{e}_{\alpha}) = \exp\left(-\sum_{j \geq 1}\frac{\sigma^j\hat{e}_{j\alpha}}{j((-q^{\frac{1}{2}})^j - (-q^{\frac{1}{2}})^{-j})}\right).
\end{equation}
Fix $n \in \Z$ and $\sigma'$ in the maximal ideal. We consider the adjoint action on $\widehat{\g}_q$ of (shifts and powers of) $q$-dilogarithms, 
\begin{equation*}
{\bf U}^{\Omega}((-q^{\frac{1}{2}})^n\sigma'\hat{e}_{\alpha}) = \Ad({\bf E}^{\Omega}((-q^{\frac{1}{2}})^n\sigma'\hat{e}_{\alpha})). 
\end{equation*}
Using the identity
\begin{equation*} 
{\bf E}(\sigma\hat{e}_{\alpha}) = \prod_{k \geq 0} \left(1 + q^{k + \frac{1}{2}}\sigma\hat{e}_{\alpha}\right)^{-1},
\end{equation*}
one can check that the adjoint action is given by
\begin{equation}
{\bf U}((-q^{\frac{1}{2}})^n\sigma'\hat{e}_{\alpha})(\hat{e}_{\beta}) = \hat{e}_{\beta} \prod^{\bra \alpha, \beta \ket - 1}_{j = 0} \left(1 + (-1)^n q^{j + \frac{n+1}{2}} \sigma'\hat{e}_{\alpha}\right) \prod^{ - 1}_{j = \bra \alpha, \beta \ket} \left(1 + (-1)^n q^{j + \frac{n+1}{2}} \sigma'\hat{e}_{\alpha}\right)^{-1}.
\end{equation}
(following the convention that the empty product equals $1$).\\

\noindent As usual we suppose from now that $\Gamma$ is generated by elements ${\gamma_1, \dots, \gamma_{\ell_1}}$ and $\eta_1, \dots \eta_{\ell_2}$ such that $\bra \gamma_i, \gamma_j\ket = \bra \eta_i, \eta_j \ket = 0, \bra \gamma_i, \eta_j\ket = 1$. As in the classical case, we set $\widehat{\g}_q = \g_q \otimes_{\C} R_k$.\\ 

\noindent A \emph{refined BPS spectrum} (for the fixed central charge $Z$) is a set of functions $\Omega_n\!: \Gamma \to \mathbb{Z}$ such that $\Omega_n(\gamma) = \Omega_n(-\gamma)$ and $\Omega_n(0) = 0$, for $n \in \Z$. Refined BPS rays are defined in the obvious way.\\ 

\noindent Suppose we have a collection of elements $a_{\gamma'} \in \widehat{\g}_q$, labelled by $\gamma' \in \Gamma$. Then we will denote by $\prod^{\z} a_{\gamma'}$ the product of the $a_{\gamma'}$, taken in the clockwise order of $Z(\gamma')$, starting from $\z \in \C^*$. We introduce a $q$-deformation of the operator \eqref{TBop}, for the same values of $\z$, acting on a suitable holomorphic family $\widehat{\psi}(\z)$ of elements of $\operatorname{GL}(\widehat{\g}_q)$ as
\begin{align*}
\nonumber \widehat{\Phi}(\widehat{\psi}(\z))_{\alpha} &= \widehat{\psi}^{0}_{\alpha}(\z) \times \\ 
\nonumber &\prod_{\gamma'}^{\z}\exp\left(\sum_n (-1)^n\Omega_n(\gamma') \sum^{\bra \gamma', \alpha \ket - 1}_{j = 0}\int_{\ell_{\gamma'}}\frac{d\z'}{\z'}\rho(\z, \z')\log \left(1 + q^{j + \frac{n+1}{2}}(s, t)^{\gamma'}\widehat{\psi}_{\gamma'}(\z')\right) \right.\\
&\quad\quad\left. -\sum_{n}(-1)^n\Omega_n(\gamma')\sum^{ - 1}_{j = \bra \gamma', \alpha \ket} \int_{\ell_{\gamma'}}\frac{d\z'}{\z'}\rho(\z, \z')\log\left(1 + q^{j + \frac{n+1}{2}} (s, t)^{\gamma'}\widehat{\psi}_{\gamma'}(\z')\right)\right).
\end{align*}
(extended by linearity). Here $\widehat{\psi}^{0}_{\alpha}(\z) = \exp(\pi R (\z^{-1} Z(\alpha) + \z\bar{Z}(\alpha))) \hat{e}_{\alpha}$. The relevant TBA type equation is
\begin{equation}\label{qTBA}
\widehat{\Phi}(\psi) = \psi. 
\end{equation}
\begin{rmk} Formally \eqref{qTBA} is very similar to \eqref{TBA}, but notice that even when $\psi$ takes values in $\Aut_{R_k}(\widehat{\g}_q)$, in general $\widehat{\Phi}(\psi)$ is only an algebra automorphism to first order (i.e. ignoring higher order brackets in the Baker-Campbell-Hausdorff formula). Moreover it seems that the presence of the operator $\prod^{\z}$ makes this integral equation rather more complicated (in particular, it is much more nonlinear).
\end{rmk}

\noindent As in Lemma \ref{RH}, one proves that a solution of \eqref{qTBA} solves the Riemann-Hilbert problem with rays $\ell_{\alpha}$ and Stokes factors $\prod_n {\bf U}^{(-1)^n\Omega_n(\alpha)}((-q^{\frac{1}{2}})^n(s, t)^{\alpha}\hat{e}_{\alpha})$.\\ 

\noindent Fix the same family of central charges $Z^\tau$ as in the classical case. We prescribe a refined BPS spectrum for $\tau < \tau_0$ by $\Omega_0(\pm\gamma_i) = \Omega_0(\pm\eta_j)=1$ for all $i, j$, while all other $\Omega_n$ vanish. We will only be concerned with \eqref{qTBA} in this special case. That is, we only look at the component
\begin{align}\label{qTBAspecial}
\nonumber\widehat{\psi}_{m\gamma_i}(\z) = \widehat{\psi}^0_{m \gamma_i}(\z)&\exp\left(-\sum_j \sum^{ - 1}_{l = \bra \eta_j, m\gamma_i \ket} \int_{\ell_{\eta_j}}\frac{d\z'}{\z'}\rho(\z, \z')\log\left(1 + q^{l + \frac{1}{2}} \widehat{\psi}_{\eta_j}(\z')\right) \right)\\
&\exp\left(\sum^{ \bra -\eta_j, m\gamma_i \ket}_{l = 0} \int_{\ell_{-\eta_j}}\frac{d\z'}{\z'}\rho(\z, \z')\log\left(1 + q^{l + \frac{1}{2}} \widehat{\psi}_{-\eta_j}(\z')\right)\right),
\end{align}
and the corresponding one for $\widehat{\psi}_{n\eta_j}(\z)$. We expand the argument in the first exponential in \eqref{qTBAspecial} as
\begin{equation*}
\sum_j \sum_{h \geq 1} \int_{\ell_{\eta_j}} \frac{d\z'}{\z'}\rho(\z, \z')\lambda^m_h (q)\widehat{\psi}_{h \eta_j}(\z'),
\end{equation*}
where
\begin{equation*}
\lambda^m_h = -\frac{(-1)^h}{h}\sum^{-1}_{l = -m} q^{(l + \frac{1}{2}) h}.
\end{equation*}
The corresponding quantity for the $\widehat{\psi}_{m\eta_j}(\z)$ component is given by
\begin{equation*}
\mu^n_h = \frac{(-1)^h}{h}\sum^{n-1}_{l = 0} q^{(l + \frac{1}{2}) h}.
\end{equation*}
Following the argument leading to \eqref{iterativeSol} we see that there is an expansion
\begin{equation*} 
\widehat{\psi}^{\infty}_{\eta_j}(\z) = \widehat{\psi}^0_{\eta_j}(\z)\exp \sum_p \mu^1_p \sum_{\gamma_{\T} = p\gamma_i} \widehat{\W}_{\T} \widehat{\G}_{\T}(\z)
\end{equation*}
for some $\widehat{\W}_{\T} \in \C[q^{\pm \frac{1}{2}}]$ and where $\widehat{\G}_{T}(\z)$ is defined recursively by 
\begin{equation*}
\widehat{\G}_{\T}(\z) = \int_{\ell_{\gamma_{\T}}} \frac{d\z'}{\z'}\rho(\z, \z') \widehat{\psi}^0_{\gamma_{\T}}(\z')\prod_{\T'}\widehat{\G}_{\T'}(\z'),
\end{equation*}
as in \eqref{propagators}. Thus $\lambda^m_h$ is the factor of $\widehat{\W}_{\T}$ associated to a directed edge $\{m\gamma_i\} \to \{n\eta_j\}$ in $\T$. Similarly $\mu^n_h$ is the factor attached to an edge $\{n\eta_j\} \to \{h\gamma_i\}$. Restricting to $\T$ whose root is some $m\gamma_i$ we find
\begin{equation*}
\widehat{\W}_{\T} = \prod_{E\in\T^{[1]}} \lambda(E) \mu(E).
\end{equation*}
We can go through the proof of Lemma \ref{LemmaA}, replacing $\G_{\T}(\z)$ with $\widehat{\G}_{\T}(\z)$. Then we see that the same set of tropical types $\{[\Upsilon_j]\}_{\T}$ emerges, but now the key point is that these types appear naturally weighted by a monomial $m([\Upsilon_j]) \in \C[q^{\pm\frac{1}{2}}]$. To see this notice that in the residue terms \eqref{res2}, \eqref{res3} the factors $(-1)^{\alpha(v) + \alpha(v')}\psi^0_{\alpha(v) + \alpha(v')}$, respectively $(-1)^{\alpha(v) + \alpha(v''_k)}\psi^0_{\alpha(v) + \alpha(v''_k)}$ must be replaced with
\begin{equation*}
\widehat{\psi}^0_{\alpha(v)}\widehat{\psi}^0_{\alpha(v')} = q^{\frac{1}{2}\bra\alpha(v),\alpha(v')\ket}\widehat{\psi}^0_{\alpha(v) + \alpha(v')}, \quad \widehat{\psi}^0_{\alpha(v)}\widehat{\psi}^0_{\alpha(v''_k)} = q^{\frac{1}{2}\bra\alpha(v),\alpha(v''_k)\ket}\widehat{\psi}^0_{\alpha(v) + \alpha(v''_k)}.
\end{equation*}
Let
\begin{equation*}
\widehat{J}_{\T}(\eps, R)\hat{e}_{c(\T)} = \widehat{\G}_{\T}(\z, \tau_0 - \eps) - \G_{\T}(\z, \tau_0 + \eps).
\end{equation*} 
We find a leading term for $\widehat{J}_{\T}(\eps, R)$ given by 
\begin{equation*}
\sum_{\{[\Upsilon_j]\}_{\T}} m([\Upsilon_j]) f(\z, R).
\end{equation*}  
We take the sum over instanton contributions for trees $\T$ labelled by two charges $\gamma, \eta$, of fixed tropical degree ${\bf w}$, and with $\gamma_{\T} = w_{11}\gamma$.  Then following the proof of Theorem \ref{TheoremB}, but replacing the relevant GPS theory with its $q$-deformation as in \cite{us} section 4, one can prove
\begin{equation*}
\sum_{\deg{\T} = {\bf w}, \gamma_{\T} = w_{11}\gamma} \widehat{\W}_{\T} \left(\sum_{\{[\Upsilon_j]\}_{\T}} m([\Upsilon_j])\right) = \prod_{i, j}\frac{1}{w_{i j}[w_{i j}]_q}\frac{\widehat{N}^{\rm trop}({\bf w})}{\Aut({\bf w}')}, 
\end{equation*}
where the $q$-deformed tropical counts $\widehat{N}^{\rm trop}({\bf w})$ are a special case of the Block-G\"ottsche invariants (\cite{gottsche}, \cite{mik}), as discussed in \cite{us} section 4.\\ 

\begin{exm} The $q$-deformed correction
\begin{equation*} 
\int_{\ell^+_{\gamma}}\frac{d\z_1}{\z_1}\rho(\z, \z_1)\widehat{\psi}^0_{\gamma}(\z_1)\int_{\ell^+_{\eta}}\frac{d\z_2}{\z_2}\rho(\z_1, \z_2)\widehat{\psi}^0_{\eta}(\z_2)
\end{equation*}
leads to a residue term 
\begin{equation*} 
\int_{\ell^-_{\gamma + \eta}}\frac{d\z'}{\z'}\rho(\z, \z')q^{\frac{1}{2}}\widehat{\psi}^0_{\gamma + \eta}(\z'),
\end{equation*}
so for $[\Upsilon]$ the type of a tropical line,
\begin{equation*}
m([\Upsilon]) = q^{\frac{1}{2}}, \quad \widehat{N}^{\rm trop}(1,1) = \lambda^1_1 q^{\frac{1}{2}} = 1.
\end{equation*}
\end{exm}

\vspace{.5cm}
\noindent Dipartimento di Matematica ``F. Casorati"\\
Universit\`a di Pavia, via Ferrata 1, 27100 Pavia, Italia\\

\noindent
{\tt{saraangela.filippini@unipv.it}\\
\tt{jacopo.stoppa@unipv.it}}

\end{document}